\documentclass[10 pt]{amsart}
\usepackage{amsfonts,amsmath,amsthm,amssymb} 

\usepackage[OT4]{fontenc}
\usepackage[utf8]{inputenc}
\usepackage{enumerate}
\usepackage{esvect}
\usepackage{mathtools}
\usepackage{bm}
\usepackage[svgnames]{xcolor}
\usepackage{float}
\usepackage{tikz}
\DeclareMathOperator{\conv}{conv}
\DeclareMathOperator{\card}{card}
\DeclareMathOperator{\dist}{dist}
\DeclareMathOperator{\Avg}{Avg}
\DeclareMathOperator{\Area}{Area}
\DeclareMathOperator{\Perim}{Perim}
\newcommand{\RR}{\mathbb{R}}
\newcommand{\fb}[1]{\mathbf{#1}}

\newcommand{\Vol}[1]{\mathrm{Vol}({#1})}

\renewcommand{\d}[1]{\,\mathrm{d}{#1}}

\theoremstyle{plain}
\newtheorem{theorem}{Theorem}[section]
\newtheorem{corollary}[theorem]{Corollary}

\theoremstyle{definition}

\theoremstyle{remark}
\newtheorem{remark}{Remark}[section]

\numberwithin{equation}{section}



\begin{document}
\title{Applications of the Hermite-Hadamard inequality}
\author[M. Nowicka]{Monika Nowicka}
\address{Institute of Mathematics and Physics, UTP University of Science and Technology, al. prof. Kaliskiego 7, 85-796 Bydgoszcz, Poland}
\email{monika.nowicka@utp.edu.pl}

\author[A. Witkowski]{Alfred Witkowski}
\email{alfred.witkowski@utp.edu.pl}
\subjclass[2010]{26D15,26B15}
\keywords{Hermite-Hadamard inequality, convex function, polygon, polyhedron, annulus}
\date{21.03.2016}

\begin{abstract}
We show how the recent improvement of the Hermite-Hadamard inequality can be applied to some (not necessarily convex) planar figures and three-dimensional bodies satisfying some kind of regularity.
\end{abstract}
\maketitle
\section{Introduction}
The classical Hermite-Hadamard inequality \cite{Had} states that for a convex function $f\colon[a,b]\to\RR$
\begin{equation}
f\left(\frac{a+b}{2}\right)\leq \frac{1}{b-a}\int_a^b f(t)\d{t}\leq \frac{f(a)+f(b)}{2}.
\label{eq:HH-dim1}
\end{equation}
Due to its simple and elegant form it became a natural object of investigations. Neuman and Bessenyei \cite{Neu,Bes} proved the version for simplices   saying that if $\Delta\subset\RR^n$ is a simplex with barycenter $\mathbf{b}$ and vertices $\mathbf{x}_0,\dots,\mathbf{x}_n$ and $f\colon\Delta\to\RR$ is convex, then
\begin{equation}\label{eq:HH-Bess}
	f(\fb{b})\leq \frac{1}{\Vol{\Delta}}\int_\Delta f(\fb{x})\d{\fb{x}}\leq \frac{f(\fb{x}_0)+\dots +f(\fb{x}_n)}{n+1}.
\end{equation}
The following generalizations for convex function on disk and ball can be found in \cite{DP}:
If $D(O,R)\subset\RR^2$ is a disk and $f\colon D\to\RR$ is convex and continuous, then
$$f(O)\leq \frac{1}{\pi R^2}\iint\limits_{\mathclap{D(O,R)}} f(x,y)\d{x}\d{y}\leq \frac{1}{2\pi R}\int\limits_{\mathclap{\partial D(O,R)}} f(x,y)\d{s}$$
and If $B(O,R)\subset\RR^3$ is a ball and $f\colon B\to\RR$ is convex and continuous, then
$$f(O)\leq \frac{3}{4\pi R^3}\iiint\limits_{\mathclap{B(O,R)}} f(x,y,z)\d{x}\d{y}\d{z}\leq \frac{1}{4\pi R^2}\iint\limits_{\mathclap{\partial B(O,R)}} f(x,y,z)\d{S}.$$
The stronger version of the right-hand side of \eqref{eq:HH-dim1} (\cite[page 140]{PPT})
$$\frac{1}{b-a}\int_a^b f(t)\d{t}\leq \frac{1}{2}\left(f\left(\frac{a+b}{2}\right)+\frac{f(a)+f(b)}{2}\right)$$
also received generalizations for simplices \cite{WW}, disks, 3-balls and regular $n$-gons $P$ \cite{Chen}:
\begin{equation}
\frac{1}{\Vol{\Delta}}\int_\Delta f(\fb{x})\d{\fb{x}}\leq \frac{1}{n+1}f(\fb{b})+\frac{n}{n+1}\frac{f(\fb{x}_0)+\dots +f(\fb{x}_n)}{n+1},
\label{eq:HHstronger}
\end{equation}
$$ \frac{1}{\pi R^2}\iint\limits_{\mathclap{ D(O,R)}} f(x,y)\d{x}\d{y}\leq \frac{1}{3}f(O)+\frac{2}{3}\cdot\frac{1}{2\pi R}\int\limits_{\mathclap{\partial D(O,R)}} f(x,y)\d{s},$$

$$\frac{3}{4\pi R^3}\iiint\limits_{\mathclap{B(O,R)}} f(x,y,z)\d{x}\d{y}\d{z}\leq \frac{1}{4}f(O)+\frac{3}{4}\cdot\frac{1}{4\pi R^2}\iint\limits_{\mathclap{\partial B(O,R)}} f(x,y,z)\d{S},$$

$$\frac{1}{\Area(P)}\iint_P f(x,y)\d{x}\d{y}\leq \frac{1}{3}f(O)+\frac{2}{3\Perim( P)}\int_{\partial P} f(x,y)\d s.$$

In this paper we use the lower and upper estimates for the average of a convex function over a simplex obtained by the authors in \cite{NoWiL,NoWiR} to provide the alternate proof of the above results and to generalize then to   figures and bodies  satisfying some regularity conditions and to broader class of functions.

\section{Definitions and lemmas}
Suppose $\mathbf{x}_0,\dots, \mathbf{x}_n\in\mathbb{R}^n$ are the vertices of a simplex $\Delta\subset\RR^n$.

For a nonempty set $K\subset\{0,\dots,n\}$  we denote by $\Delta_K$ the simplex $\conv \{\mathbf{x}_i: i\in K\}$.\\
For every set $K\subsetneq \{0,\dots,n\}$ we denote by $\Delta^{[K]}$ the simplex with vertices
$$\mathbf{x}_j^{[K]}=\frac{1}{n+1}\sum_{i\in K} \mathbf{x}_i+\frac{n+1-k}{n+1} \mathbf{x}_j,\quad j\in\{0,\dots,n\}\setminus K.$$
We shall denote by $h_\mathbf{a}^\lambda$ the homothety with center $\mathbf{a}$ and scale $\lambda$, i.e. the mapping defined by the formula
$$h_\mathbf{a}^\lambda(\mathbf{x})=\mathbf{a}+\lambda(\mathbf{x}-\mathbf{a}).$$
By $\partial B$ we shall denote the boundary of the set $B$.

\begin{remark}\label{rem:delta^k na plaszczyznie}
	In the plane the simplices $\Delta^{[K]}$ are: the triangle (if $K=\emptyset$), intersection of the triangle and a line parallel to one of its sides and passing through its barycenter (if $K$ has one element) and the barycenter itself if $K$ has two elements.\\ 
In case of three dimensions we have respectively the tetrahedron, triangles parallel to its faces, segments parallel to its edges, all of them having the same barycenter.\\
Note that the simplices $\Delta^{[K]}$ can be obtained by applying homotheties to the faces of $\Delta$. The details are explained in \cite{NoWiL}.
\end{remark}

If $U\subset \RR^k$ and    $f\colon U\to\RR$ is a Riemann integrable function, then by 
$$\Avg(f,U)=\frac{1}{\Vol{U}} \int_U f(\mathbf{x})\d{\mathbf{x}}$$
we shall denote its average value over $U$.
For simplicity of notation if $A,B,\dots,K$ are points and $U=\conv\{A,B,\dots,K\}$ we shall write $\Avg(f,AB\dots K)$.

The following results provide the main tools for our investigations:

\begin{theorem}[\cite{NoWiR}]\label{th:R}
Suppose $f\colon\Delta\to\RR$ is a convex function and $K,L\subset \{0,\dots,n\}$ are disjoint, nonempty sets. Then
$$	\Avg(f,\Delta_{K\cup L})\leq \frac{\card K}{\card K\cup L}\cdot \Avg(f,\Delta_{K}) + 
\frac{\card L}{\card K\cup L}\cdot  \Avg(f,\Delta_{L}).$$
\end{theorem}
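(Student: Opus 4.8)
The plan is to realise $\Delta_{K\cup L}$ as the \emph{join} of its two opposite faces $\Delta_K$ and $\Delta_L$, to integrate in the resulting join coordinates, and then to apply convexity slice by slice. Write $k=\card K$ and $l=\card L$, so that $\card(K\cup L)=k+l$ because $K$ and $L$ are disjoint. First I would note that every $\fb z\in\Delta_{K\cup L}$ has a barycentric representation $\fb z=\sum_{i\in K\cup L}\lambda_i\fb x_i$ with $\lambda_i\geq 0$ and $\sum\lambda_i=1$; setting $s=\sum_{i\in K}\lambda_i$ and normalising the two groups of weights gives
$$\fb z=s\fb p+(1-s)\fb q,\qquad \fb p=\sum_{i\in K}\frac{\lambda_i}{s}\fb x_i\in\Delta_K,\quad \fb q=\sum_{j\in L}\frac{\lambda_j}{1-s}\fb x_j\in\Delta_L,$$
with $s\in[0,1]$. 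Thus $\Phi(s,\fb p,\fb q)=s\fb p+(1-s)\fb q$ maps $[0,1]\times\Delta_K\times\Delta_L$ onto $\Delta_{K\cup L}$, and it is a bijection onto the interior away from a set of measure zero.

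The key technical step is the change-of-variables formula for $\Phi$. Since $\dim\Delta_K=k-1$ and $\dim\Delta_L=l-1$, while the $s$-direction supplies one more dimension, the domain and target both have dimension $k+l-1$, and the Jacobian factorises: an infinitesimal displacement of $\fb p$ inside $\Delta_K$ scales by $s$, a displacement of $\fb q$ inside $\Delta_L$ scales by $1-s$, and the $s$-derivative points along $\fb p-\fb q$. Evaluating the determinant (the routine part) shows the weight is proportional to $s^{k-1}(1-s)^{l-1}$, so that writing
$$G(s)=\frac{1}{\Vol{\Delta_K}\,\Vol{\Delta_L}}\int_{\Delta_K}\int_{\Delta_L}f\bigl(s\fb p+(1-s)\fb q\bigr)\d{\fb q}\,\d{\fb p}$$
one obtains, after all geometric constants cancel in passing to averages,
$$\Avg(f,\Delta_{K\cup L})=\frac{1}{\mathrm{B}(k,l)}\int_0^1 s^{k-1}(1-s)^{l-1}\,G(s)\d s,\qquad \mathrm{B}(k,l)=\int_0^1 s^{k-1}(1-s)^{l-1}\d s.$$
I expect pinning down this Jacobian, i.e. justifying the exponents $k-1$ and $l-1$, to be the main obstacle, and I would cross-check it on the elementary cases: $k=l=1$ recovers the classical right-hand side of \eqref{eq:HH-dim1}, and $k=2,\,l=1$ recovers slicing a triangle by lines parallel to an edge.

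With the decomposition in hand the conclusion is immediate. Applying convexity pointwise, $f(s\fb p+(1-s)\fb q)\leq s f(\fb p)+(1-s)f(\fb q)$, and averaging over $\fb p\in\Delta_K$ and $\fb q\in\Delta_L$ yields $G(s)\leq s\,\Avg(f,\Delta_K)+(1-s)\,\Avg(f,\Delta_L)$. Substituting and integrating, the coefficients become ratios of Beta values,
$$\frac{1}{\mathrm{B}(k,l)}\int_0^1 s^{k}(1-s)^{l-1}\d s=\frac{\mathrm{B}(k+1,l)}{\mathrm{B}(k,l)}=\frac{k}{k+l},\qquad \frac{1}{\mathrm{B}(k,l)}\int_0^1 s^{k-1}(1-s)^{l}\d s=\frac{\mathrm{B}(k,l+1)}{\mathrm{B}(k,l)}=\frac{l}{k+l},$$
which gives exactly $\Avg(f,\Delta_{K\cup L})\leq\frac{k}{k+l}\Avg(f,\Delta_K)+\frac{l}{k+l}\Avg(f,\Delta_L)$, the claimed inequality. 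An alternative to the determinant computation would be induction on $k+l$: peel off a single vertex, reducing to the one-dimensional slicing already verified, and combine the pieces with Fubini; this sidesteps the Jacobian at the cost of more bookkeeping with the normalising constants.
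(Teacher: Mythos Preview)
The paper does not supply its own proof of Theorem~\ref{th:R}; it is quoted from the companion paper \cite{NoWiR}, so there is no in-paper argument to compare against. That said, your approach is sound and is in fact a natural way to establish the result: the join parametrisation $\Phi(s,\fb p,\fb q)=s\fb p+(1-s)\fb q$ does cover $\Delta_{K\cup L}$ bijectively up to a null set, and the Jacobian really is a constant multiple of $s^{k-1}(1-s)^{l-1}$. The one place where you are a bit quick is precisely the point you flag: that the remaining factor is \emph{constant} in $(\fb p,\fb q)$. This follows because $\partial_s\Phi=\fb p-\fb q$ differs from the fixed vector $\fb x_{i_0}-\fb x_{j_0}$ (any chosen $i_0\in K$, $j_0\in L$) by a linear combination of the other columns $s(\fb x_i-\fb x_{i_0})$ and $(1-s)(\fb x_j-\fb x_{j_0})$, so column operations reduce the determinant to one independent of the barycentric parameters. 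Once that is nailed down, the Beta-function computation you give is correct and yields exactly $k/(k+l)$ and $l/(k+l)$. Your proposed inductive alternative (peeling off one vertex at a time) also works and is closer in spirit to how such refinements are often argued in the Hermite--Hadamard literature, trading the single Jacobian calculation for repeated one-dimensional slicings.
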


\begin{theorem}[\cite{NoWiL}]\label{th:L}
If $K\subset L$ are proper subsets of $\{0,\dots,n\}$ and $f\colon\Delta\to\RR$ is a convex function, then
$$f(\mathbf{b})\leq \Avg(f,\Delta^{[L]})\leq \Avg(f,\Delta^{[K]})\leq \Avg(f,\Delta).$$
	
\end{theorem}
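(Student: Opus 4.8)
The plan is to read the three inequalities as the two endpoints and the interior of a single monotone chain. The left inequality $f(\mathbf{b})\le\Avg(f,\Delta^{[L]})$ is pure Jensen: I would first record the one-line computation that every $\Delta^{[K]}$ has barycenter $\mathbf{b}$ (sum the defining formula for $\mathbf{x}_j^{[K]}$ over $j\notin K$), so that applying Jensen's inequality to the uniform probability measure on $\Delta^{[L]}$, whose barycenter is $\mathbf{b}$, gives $f(\mathbf{b})\le\Avg(f,\Delta^{[L]})$ at once. The right inequality $\Avg(f,\Delta^{[K]})\le\Avg(f,\Delta)$ is the instance $\emptyset\subset K$ of the middle one (since $\Delta=\Delta^{[\emptyset]}$), so the entire content is the monotonicity $\Avg(f,\Delta^{[L]})\le\Avg(f,\Delta^{[K]})$ for $K\subset L$.

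For that I would first give a barycentric description of the objects. Writing a point of $\Delta$ as $\sum_i t_i\mathbf{x}_i$ with $t_i\ge0$ and $\sum_i t_i=1$, the definition of $\mathbf{x}_j^{[K]}$ shows that $\Delta^{[K]}$ is exactly the slice $\{t_i=\tfrac1{n+1}\text{ for all }i\in K\}$ of $\Delta$. The key structural observation is that the construction iterates: for $L=K\cup\{m\}$ with $m\notin K$ one has $\Delta^{[L]}=(\Delta^{[K]})^{[\{m\}]}$, the analogous slice taken inside the simplex $\Delta^{[K]}$ whose vertices are the $\mathbf{x}_j^{[K]}$. This I would verify by substituting the vertices $\mathbf{x}_j^{[K]}$ into the defining formula applied to $\Delta^{[K]}$, the only genuinely computational step. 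Granting it, the middle inequality follows by induction on $\card(L\setminus K)$ from the single base case $\Avg(f,\Sigma^{[\{m\}]})\le\Avg(f,\Sigma)$ for an arbitrary simplex $\Sigma$, applied with $\Sigma=\Delta^{[K]}$ (on which $f$ is still convex).

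It remains to prove this base case, and here I would slice $\Sigma$ by the barycentric coordinate $t$ attached to its vertex $\mathbf{y}_m$. The slice $S_t=\{t_m=t\}$ is a homothetic copy of the opposite face $F$, so with $d=\dim\Sigma$ one has $\Vol{S_t}\propto(1-t)^{d-1}$; moreover the slice average $g(t)=\Avg(f,S_t)$ equals a uniform average over $\mathbf{z}\in F$ of the convex functions $t\mapsto f\bigl(t\mathbf{y}_m+(1-t)\mathbf{z}\bigr)$, hence $g$ is convex on $[0,1]$. By Fubini $\Avg(f,\Sigma)=\int_0^1 g(t)\,\d\mu(t)$ with $\d\mu(t)=d(1-t)^{d-1}\,\d t$ a probability measure, whereas $\Avg(f,\Sigma^{[\{m\}]})=g\bigl(\tfrac1{d+1}\bigr)$ because $\Sigma^{[\{m\}]}=S_{1/(d+1)}$. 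The crux is the elementary Beta-integral identity $\int_0^1 t\,\d\mu(t)=\tfrac1{d+1}$: the barycenter of $\mu$ is exactly the height of the slice $\Sigma^{[\{m\}]}$, so Jensen's inequality for the convex $g$ gives $g\bigl(\tfrac1{d+1}\bigr)\le\int_0^1 g(t)\,\d\mu(t)=\Avg(f,\Sigma)$.

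The main obstacle is packaged into two checks: the algebraic identity $\Delta^{[L]}=(\Delta^{[K]})^{[\{m\}]}$, which is what lets a one-dimensional argument do all the work, and the coincidence that the center of mass of the slicing weight $(1-t)^{d-1}$ lands precisely at $\tfrac1{d+1}$, so that the evaluation point in Jensen is the barycenter of the measure. Everything else (the barycenter of $\Delta^{[K]}$, the convexity of $g$, and the Fubini factorization) is routine.
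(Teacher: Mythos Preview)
Your argument is correct. The barycenter computation for $\Delta^{[K]}$, the identification of $\Delta^{[K]}$ with the barycentric slice $\{t_i=\tfrac{1}{n+1}:i\in K\}$, the iteration $\Delta^{[K\cup\{m\}]}=(\Delta^{[K]})^{[\{m\}]}$, and the one-dimensional Jensen step with weight $d(1-t)^{d-1}\,\d t$ all check out; in particular the coincidence $\int_0^1 t\cdot d(1-t)^{d-1}\,\d t=\tfrac{1}{d+1}$ is exactly what is needed.

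Note, however, that the present paper does not prove Theorem~\ref{th:L} at all: it is quoted from \cite{NoWiL} as a ready-made tool, so there is no in-paper proof to compare against. The only hint about the original argument is Remark~\ref{rem:delta^k na plaszczyznie}, which stresses that each $\Delta^{[K]}$ is a homothetic image of a face of $\Delta$; the proof in \cite{NoWiL} is organized around that homothety picture. Your route---reducing to a single-index step via the nesting $\Delta^{[L]}=(\Delta^{[K]})^{[\{m\}]}$ and then handling that step by slicing and a one-variable Jensen inequality---is a clean, self-contained alternative that isolates the analytic content in the convexity of the slice-average function $g$ and the moment identity for the Beta-type weight.
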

The above theorems were proven by Chen \cite{Chen} in case $\Delta$ is a triangle.

In the sequel we shall apply both theorems to some planar and 3-dimensional bodies.

\section{Quadrilateral}
Besseneyi in \cite{Bes} proved that if $ABCD$ is a parallelogram  and $f$ is a convex function, then $\Avg(f,ABCD)\leq \frac{1}{4}(f(A)+f(B)+f(C)+f(D))$.\\
We will try to generalize and improve this result.

Consider a quadrilateral $ABCD$  such that the segment $AC$ divides its area evenly (see Figure \ref{pic:Quadrilateral with equal halves}a).
\begin{figure}[h]%
\begin{center}
\includegraphics{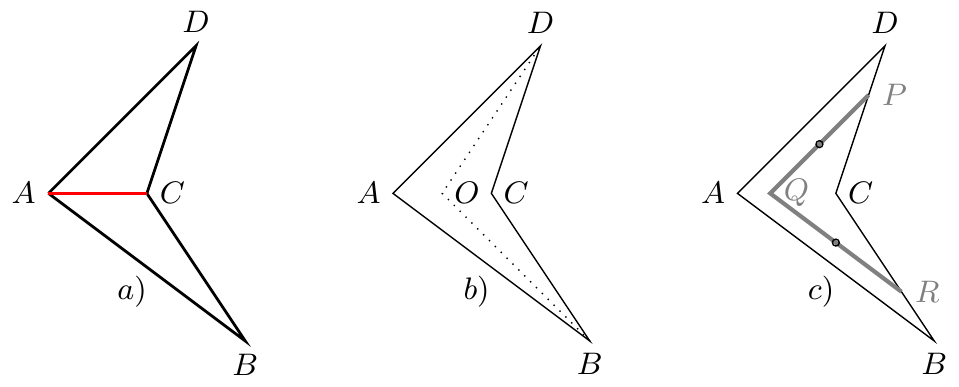}
\caption{Quadrilateral with equal halves}%
\label{pic:Quadrilateral with equal halves}%

\end{center}
\end{figure}

 We can apply Theorem \ref{th:R} to both triangles $ABC$ and $ADC$ to obtain
\begin{align}
	\Avg(f,ABC)&\leq \frac{1}{3}f(B)+\frac{2}{3}\Avg(f,AC) \label{neq:quadrilateral<ABC},	\\
	\Avg(f,ACD)&\leq \frac{1}{3}f(D)+\frac{2}{3}\Avg(f,AC) \label{neq:quadrilateral<ADC},	
\end{align}
which yields
\begin{align}\label{neq:quadrilateral<}
\Avg(f,ABCD)&\leq\frac{1}{3}\left(\frac{f(B)+f(D)}{2}+2\Avg(f,{AC})\right)\\
	&\leq\frac{1}{3}\left(\frac{f(B)+f(D)}{2}+f(A)+f(C)\right)\notag.
\end{align}

By adding a midpoint $O$ of the segment $AC$ we can get another upper bound (see Figure \ref{pic:Quadrilateral with equal halves}b):
\begin{align*}
	\Avg(f,AOB)&\leq\frac{1}{3}f(O)+\frac{2}{3}\Avg(f,AB)\\	
	\Avg(f,BOC)&\leq\frac{1}{3}f(O)+\frac{2}{3}\Avg(f,BC)\\	
	\Avg(f,COD)&\leq\frac{1}{3}f(O)+\frac{2}{3}\Avg(f,CD)\\	
	\Avg(f,DOA)&\leq\frac{1}{3}f(O)+\frac{2}{3}\Avg(f,DA)	
\end{align*}
and since the four triangles have the same area this produces
\begin{align}
	{\Avg(f,ABCD)}&{\leq\frac{f(O)}{3}+\frac{2}{3}\frac{\Avg(f,AB)+\Avg(f,BC)+\Avg(f,CD)+\Avg(f,DA)}{4}}	\label{neq:ABCDO1}\\
	&	{\leq\frac{f(O)}{3}+\frac{2}{3}\frac{f(A)+f(B)+f(C)+f(D)}{4}}.\label{neq:ABCDO2}
\end{align}
Thus we have proven the following
\begin{theorem}\label{th:quadrilateral1}
	Let $ABCD$ be a quadrilateral such that the segment $AC$ divides it into two triangles of equal area and $O$ be the midpoint of $AC$. If $f\colon ABCD\to\RR$ is such that its restrictions to triangles $ABC$ and $ACD$ are convex, then the inequalities \eqref{neq:quadrilateral<}, \eqref{neq:ABCDO1} and \eqref{neq:ABCDO2} hold.
\end{theorem}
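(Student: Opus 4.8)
The plan is to reduce the statement to repeated applications of Theorem~\ref{th:R} on triangles, combined with one elementary fact: if a region is partitioned into finitely many pieces of equal area, then its average equals the arithmetic mean of the averages over the pieces. The single instance of Theorem~\ref{th:R} that I would use throughout is the one in which, for a triangle, $K$ consists of a single vertex $v$ and $L$ is the opposite edge $e$; then $\card K=1$, $\card L=2$, $\card(K\cup L)=3$, and since the average of $f$ over a one-point simplex is just $f(v)$, the theorem reads
$$\Avg(f,\Delta)\leq\tfrac13 f(v)+\tfrac23\Avg(f,e).$$
One subtlety to keep in mind from the outset is that $f$ is assumed convex only on the triangles $ABC$ and $ACD$, not on the whole quadrilateral; accordingly, every application of Theorem~\ref{th:R} must be to a simplex contained in one of these two triangles, where the hypothesis of the theorem is actually met.

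For the first chain \eqref{neq:quadrilateral<}, I would use that the diagonal $AC$ splits $ABCD$ into $ABC$ and $ACD$ of equal area, so that $\Avg(f,ABCD)=\tfrac12\bigl(\Avg(f,ABC)+\Avg(f,ACD)\bigr)$. Applying the displayed instance above to $ABC$ with apex $B$ and to $ACD$ with apex $D$ produces exactly \eqref{neq:quadrilateral<ABC} and \eqref{neq:quadrilateral<ADC}; averaging them gives the first line of \eqref{neq:quadrilateral<}. For the second line I would bound $\Avg(f,AC)$ using the right-hand side of the one-dimensional inequality \eqref{eq:HH-dim1} (equivalently, Theorem~\ref{th:R} with $K=\{A\}$, $L=\{C\}$), namely $\Avg(f,AC)\leq\tfrac12\bigl(f(A)+f(C)\bigr)$, which is legitimate because the segment $AC$ is an edge of $ABC$ and the restriction of a convex function to a segment in its domain is convex.

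For the second chain, I would introduce the midpoint $O$ of $AC$ and first settle the area bookkeeping: since $O$ bisects $AC$, the triangles $AOB$ and $BOC$ each have half the area of $ABC$, while $COD$ and $DOA$ each have half the area of $ACD$, so the equal-halves hypothesis makes all four equal to one quarter of the area of $ABCD$. Each of these four triangles lies inside $ABC$ or $ACD$, hence $f$ is convex on it and the displayed instance applies with apex $O$; averaging the four resulting inequalities yields \eqref{neq:ABCDO1}. Bounding each edge average $\Avg(f,AB),\dots,\Avg(f,DA)$ by the mean of its endpoint values, again via \eqref{eq:HH-dim1}, and collecting terms gives \eqref{neq:ABCDO2}. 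The computations are entirely routine; the only points requiring genuine attention are the two verifications just described — that the equal-area decomposition holds and that every simplex fed into Theorem~\ref{th:R} sits inside a triangle on which $f$ is convex — since these are precisely where the hypotheses of the theorem are consumed.
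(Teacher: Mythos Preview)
Your proposal is correct and follows essentially the same approach as the paper: split along $AC$ and apply Theorem~\ref{th:R} with apices $B$, $D$ for \eqref{neq:quadrilateral<}, then split at the midpoint $O$ into four equal-area triangles and apply Theorem~\ref{th:R} with apex $O$ for \eqref{neq:ABCDO1}--\eqref{neq:ABCDO2}. If anything, you are slightly more careful than the paper in explicitly verifying that each subtriangle lies inside $ABC$ or $ACD$ so that the convexity hypothesis of Theorem~\ref{th:R} is available.
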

 To obtain the lower bound we apply Theorem \ref{th:L} to both triangles $ABC$ and $ADC$. By Remark \ref{rem:delta^k na plaszczyznie} we have four reasonable choices for each triangle, so we can produce 16 inequalities. An example is shown on the Figure \ref{pic:Quadrilateral with equal halves}c: the segments $PQ=h_C^{2/3}(DA)$ and $QR=h_C^{2/3}(AB)$ pass through the barycenters of both triangles and therefore $\Avg(f,PQ)\leq \Avg(f,ACD)$ and $ \Avg(f,QR)\leq \Avg(f,ABC)$, which leads to
$$\frac{\Avg(f,PQ)+\Avg(f,QR)}{2}\leq \Avg(f,ABCD).$$
The reader will easily find two other pairs of segments for which Theorem \ref{th:L} can be applied.

A parallelogram offers more opportunities: firstly, we obtain inequalities \eqref{neq:quadrilateral<ABC} and \eqref{neq:quadrilateral<ADC} with $BD$ and $AC$ swapped thus obtaining an improvement of Bessenyei's result
\begin{theorem}\label{th:quadrilateral2}
	Let ABCD be a parallelogram with center $O$ and $f\colon ABCD\to\RR$ be such that its restrictions to triangles $AOB$, $BOC$, $COD$ and $DOA$ are convex, then the inequalities \eqref{neq:ABCDO1} and \eqref{neq:ABCDO2} hold and additionally
\begin{align*}
	\Avg(f,ABCD)&\leq\frac{1}{3}\min\left\{{\textstyle\frac{f(B)+f(D)}{2}+2\Avg(f,{AC}),\frac{f(A)+f(C)}{2}+2\Avg(f,{BD})}\right\}	\\
	&	\leq \frac{1}{3}\min\left\{{\textstyle\frac{f(B)+f(D)}{2}+f(A)+f(C),\frac{f(A)+f(C)}{2}+f(B)+f(C)}\right\}\\
	&\leq \tfrac{f(A)+f(B)+f(C)+f(D)}{4}.
\end{align*}
\end{theorem}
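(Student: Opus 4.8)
The plan is to run the single-diagonal estimate \eqref{neq:quadrilateral<} twice, once for each diagonal of the parallelogram, and then combine. Since \eqref{neq:ABCDO1} and \eqref{neq:ABCDO2} are exactly the bounds produced by applying Theorem~\ref{th:R} to the four triangles $AOB,BOC,COD,DOA$ (the computation preceding Theorem~\ref{th:quadrilateral1}), they hold verbatim under the present hypotheses and require no further work. For the additional estimates I would first treat the diagonal $AC$: writing $\Avg(f,AC)=\tfrac12(\Avg(f,AO)+\Avg(f,OC))$ because $O$ bisects $AC$, Theorem~\ref{th:R} applied to $AOB$ and to $BOC$ (splitting off the vertex $B$) gives $\Avg(f,ABC)\le\tfrac13 f(B)+\tfrac23\Avg(f,AC)$, and likewise $\Avg(f,ACD)\le\tfrac13 f(D)+\tfrac23\Avg(f,AC)$ from $AOD$ and $COD$. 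Averaging these two (the diagonal $AC$ splits the parallelogram into equal areas) yields the first candidate $\tfrac13\bigl(\tfrac{f(B)+f(D)}2+2\Avg(f,AC)\bigr)$. Interchanging the roles of the diagonals, i.e. splitting off the vertex $A$ from $AOB,AOD$ and the vertex $C$ from $BOC,COD$, produces the second candidate $\tfrac13\bigl(\tfrac{f(A)+f(C)}2+2\Avg(f,BD)\bigr)$; taking the smaller of the two establishes the first displayed inequality. Note that this step uses only the convexity of $f$ on the four small triangles.

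For the second displayed inequality I would bound each diagonal average by the mean of its endpoint values, i.e. $2\Avg(f,AC)\le f(A)+f(C)$ and $2\Avg(f,BD)\le f(B)+f(D)$, which is just the right-hand Hermite-Hadamard estimate \eqref{eq:HH-dim1} on the one-dimensional segment (equivalently Theorem~\ref{th:R} with the two endpoints as $K$ and $L$). Substituting into the two candidates and keeping the minimum gives the second line. The final inequality is then purely arithmetic: writing $p=f(A)+f(C)$ and $q=f(B)+f(D)$ and assuming $p\le q$ without loss of generality, the minimum is $p+\tfrac q2$, and $\tfrac13(p+\tfrac q2)\le\tfrac{p+q}4$ reduces to $p\le q$, so no convexity is needed here.

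The main obstacle is that the Hermite-Hadamard step along the diagonals requires $f$ to be convex \emph{along} $AC$ and along $BD$, and this is \emph{not} guaranteed by convexity on the four triangles $AOB,BOC,COD,DOA$ alone: a function that is affine on each of these four triangles but forms a tent peaking at $O$ (value $1$ at $O$ and $0$ at all four vertices) is convex on each small triangle yet strictly concave along both diagonals, and for it $\Avg(f,ABCD)=\tfrac13>0=\tfrac14(f(A)+f(B)+f(C)+f(D))$, violating the last inequality. Consequently the second and third inequalities force the stronger hypothesis that $f$ be convex on the large triangles $ABC$ and $ABD$ --- equivalently on all four large triangles $ABC,BCD,CDA,DAB$ --- for then $AC$ is an edge of the convex piece $ABC$ and $BD$ an edge of $ABD$, restoring convexity along both diagonals while still implying the small-triangle convexity used above. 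With that amendment the chain of three inequalities goes through as described; the first displayed inequality, however, remains valid under the weaker stated hypothesis.
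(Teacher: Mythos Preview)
Your approach coincides with the paper's: the authors also derive the first displayed bound by running the diagonal estimate \eqref{neq:quadrilateral<} once for $AC$ and once for $BD$ (this is what ``we obtain inequalities \eqref{neq:quadrilateral<ABC} and \eqref{neq:quadrilateral<ADC} with $BD$ and $AC$ swapped'' means in the text preceding the theorem), and then pass to the second and third lines via the one-dimensional Hermite--Hadamard inequality and elementary arithmetic. Your refinement of the first step---obtaining $\Avg(f,ABC)\le\tfrac13 f(B)+\tfrac23\Avg(f,AC)$ from the two small triangles $AOB$ and $BOC$ rather than directly from $ABC$---is a genuine improvement, since it shows the first displayed inequality really does follow from the stated small-triangle hypothesis.

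Your observation about the hypothesis is correct and worth emphasising: the second and third displayed inequalities are \emph{false} under convexity on $AOB,BOC,COD,DOA$ alone, and your tent counterexample (equivalently the paper's own example $f(x,y)=-|y|$ on the unit diamond from Remark~\ref{rem:about f(O)}) shows it. The step $2\Avg(f,AC)\le f(A)+f(C)$ needs convexity along the full diagonal $AC$, which is guaranteed if $f$ is convex on $ABC$ (or on $ACD$) but not by piecewise convexity on $AOB\cup BOC$. Thus the theorem as stated holds in full only under the stronger hypothesis you propose---convexity on each of the four large triangles $ABC,BCD,CDA,DAB$---while the first displayed inequality and \eqref{neq:ABCDO1}, \eqref{neq:ABCDO2} survive under the weaker one. (You have also implicitly corrected the typo $f(B)+f(C)$ in the second line, which should read $f(B)+f(D)$.)
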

The fact that the parallelogram can be divided into four triangles of equal area opens new opportunities. For example we can apply Theorem \ref{th:R} to $AOB$ (and then cyclically to others) as follows:
$$\Avg(f,AOB)\leq\frac{1}{3}f(A)+\frac{2}{3}\Avg(f,OB).$$
Summing and taking into account that the point $O$ halves both diagonals we get
\begin{theorem}
	Under the assumption of Theorem \ref{th:quadrilateral2}
	$$\Avg(f,ABCD)\leq \frac{1}{3}\frac{f(A)+f(B)+f(C)+f(D)}{4}+\frac{2}{3}\frac{\Avg(f,AC)+\Avg(f,BD)}{2}.$$
	\end{theorem}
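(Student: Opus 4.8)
The plan is to mimic the computation displayed just before the statement, applying Theorem \ref{th:R} once to each of the four triangles into which the diagonals cut the parallelogram. Since $O$ is the center of $ABCD$, it is the common midpoint of both diagonals, so the triangles $AOB$, $BOC$, $COD$, $DOA$ all have the same area, and $f$ restricted to each of them is convex by hypothesis. On the triangle $AOB$ I would apply Theorem \ref{th:R} with $K=\{A\}$ and $L=\{O,B\}$: since $\card K=1$ and $\card(K\cup L)=3$, the simplex $\Delta_L$ is the segment $OB$ and one obtains $\Avg(f,AOB)\leq\frac13 f(A)+\frac23\Avg(f,OB)$. Cyclically separating the single vertex $B$, $C$, $D$ in the remaining three triangles yields the analogous bounds with $\Avg(f,OC)$, $\Avg(f,OD)$, $\Avg(f,OA)$ on the right.

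Next I would invoke the equal-area decomposition. Because the four triangles partition $ABCD$ into pieces of equal area, the average of $f$ over the parallelogram is the arithmetic mean of the four triangle averages, that is $\Avg(f,ABCD)=\frac14\bigl(\Avg(f,AOB)+\Avg(f,BOC)+\Avg(f,COD)+\Avg(f,DOA)\bigr)$. Summing the four inequalities and dividing by $4$ then produces an upper bound equal to $\frac13$ times the arithmetic mean of $f(A),f(B),f(C),f(D)$ plus $\frac23$ times the mean of the four half-diagonal averages $\Avg(f,OA),\Avg(f,OB),\Avg(f,OC),\Avg(f,OD)$.

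Finally I would collapse the four half-diagonal averages into the two full-diagonal averages. Since $O$ is the midpoint of $AC$, the segment $AC$ splits into $OA$ and $OC$ of equal length, so the one-dimensional average satisfies $\Avg(f,OA)+\Avg(f,OC)=2\Avg(f,AC)$; likewise $\Avg(f,OB)+\Avg(f,OD)=2\Avg(f,BD)$. Substituting these identities and simplifying the constants gives precisely the claimed inequality. The only point requiring care is this last bookkeeping step: the equal-area mean contributes a factor $\frac14$ while the midpoint property contributes a factor $2$, and these must combine so that the coefficient of $\Avg(f,AC)+\Avg(f,BD)$ comes out to $\frac13$, matching $\frac23\cdot\frac12$. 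There is no genuine obstacle beyond this arithmetic, since both the convexity needed for Theorem \ref{th:R} and the equal-area property are guaranteed by the hypotheses of Theorem \ref{th:quadrilateral2}.
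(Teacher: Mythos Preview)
Your proposal is correct and follows essentially the same route as the paper: apply Theorem~\ref{th:R} to each triangle $AOB$, $BOC$, $COD$, $DOA$ by isolating the ``outer'' vertex, average the four resulting inequalities using equal areas, and then use that $O$ bisects both diagonals to combine the half-diagonal averages into full-diagonal ones. The paper's proof is exactly this computation, sketched in one displayed line just before the theorem statement.
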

The reader will find more estimates applicable to parallelograms and rhombus in Section \ref{sec:ngons} devoted to polygons.

As above using Theorem \ref{th:L} we can produce 64 different lower bounds. Figure \ref{fig:parallelograms} illustrates two,  probably  the most spectacular, inequalities:
\begin{theorem}
Under the assumption of Theorem \ref{th:quadrilateral2} let $A'B'C'D'=h_O^{2/3}(ABCD)$, $KL=h_D^{2/3}(AO), LM=h_D^{2/3}(OC), PQ=h_B^{2/3}(AO), QR=h_B^{2/3}(OC)$. Then the following inequalities hold (see Figure \ref{fig:parallelograms})

$$\frac{\Avg(f,KL)+\Avg(f,LM)+\Avg(f,PQ)+\Avg(f,QR)}{4}\leq \Avg(f,ABCD),$$
$$\frac{\Avg(f,A'B')+\Avg(f,B'C')+\Avg(f,C'D')+\Avg(f,D'A')}{4} \leq \Avg(f,ABCD).$$
\end{theorem}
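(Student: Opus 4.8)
The plan is to reduce both displayed inequalities to four applications of Theorem~\ref{th:L}, one on each of the triangles $AOB$, $BOC$, $COD$, $DOA$ cut off by the diagonals, and then to average. The driving fact is that, $O$ being the center, these four triangles partition $ABCD$ into pieces of equal area $\tfrac14\Area(ABCD)$, so that
\[
\Avg(f,ABCD)=\frac{\Avg(f,AOB)+\Avg(f,BOC)+\Avg(f,COD)+\Avg(f,DOA)}{4}.
\]
Hence it suffices to bound the average of $f$ over each small triangle from below by the average over the corresponding segment, and then to sum and divide by four.

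The second step is to identify each segment as a simplex $\Delta^{[\{v\}]}$. By Remark~\ref{rem:delta^k na plaszczyznie}, for a triangle $\Delta$ the set $\Delta^{[\{v\}]}$ attached to a single vertex $v$ is precisely the image of the opposite side under $h_v^{2/3}$. For the first inequality this gives: $KL=h_D^{2/3}(AO)$ is $\Delta^{[\{D\}]}$ of $DOA$ (whose side opposite $D$ is $AO$); likewise $LM=h_D^{2/3}(OC)$ is $\Delta^{[\{D\}]}$ of $COD$, $PQ=h_B^{2/3}(AO)$ is $\Delta^{[\{B\}]}$ of $AOB$, and $QR=h_B^{2/3}(OC)$ is $\Delta^{[\{B\}]}$ of $BOC$. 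For the second inequality, the side $A'B'=h_O^{2/3}(AB)$ is $\Delta^{[\{O\}]}$ of $AOB$ (its side opposite $O$ being $AB$), and cyclically $B'C'$, $C'D'$, $D'A'$ are the sets $\Delta^{[\{O\}]}$ of $BOC$, $COD$, $DOA$.

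The third step applies Theorem~\ref{th:L} in the form $\Avg(f,\Delta^{[\{v\}]})\leq\Avg(f,\Delta)$, which is the chain of that theorem with $K=\emptyset$ and $L=\{v\}$. The hypotheses of Theorem~\ref{th:quadrilateral2} guarantee that $f$ is convex on each of $AOB$, $BOC$, $COD$, $DOA$, so the theorem is applicable on every small triangle. We obtain $\Avg(f,KL)\leq\Avg(f,DOA)$, $\Avg(f,LM)\leq\Avg(f,COD)$, $\Avg(f,PQ)\leq\Avg(f,AOB)$, $\Avg(f,QR)\leq\Avg(f,BOC)$ for the first group, and $\Avg(f,A'B')\leq\Avg(f,AOB)$, $\Avg(f,B'C')\leq\Avg(f,BOC)$, $\Avg(f,C'D')\leq\Avg(f,COD)$, $\Avg(f,D'A')\leq\Avg(f,DOA)$ for the second. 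Averaging each group of four and inserting the decomposition of the first step yields the two claimed inequalities.

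I expect no analytic difficulty here: once Theorem~\ref{th:L} and the equal-area partition are in hand, the content is purely the geometric bookkeeping of the second step. The only point requiring care is to pair each segment with the right triangle and the right vertex, i.e.\ to check that the endpoints of each displayed segment really match the homothety description of Remark~\ref{rem:delta^k na plaszczyznie}; a mislabeling of which side is opposite which vertex is the one place the argument could go wrong.
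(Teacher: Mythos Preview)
Your argument is correct and is exactly the approach the paper has in mind: the theorem is stated without its own proof block, but the preceding sentence ``As above using Theorem~\ref{th:L} we can produce $64$ different lower bounds'' indicates precisely your method---split the parallelogram along its diagonals into the four equal-area triangles $AOB$, $BOC$, $COD$, $DOA$, recognise each segment as the appropriate $\Delta^{[\{v\}]}$ via the homothety description in Remark~\ref{rem:delta^k na plaszczyznie}, apply Theorem~\ref{th:L}, and average. Your vertex/side pairings are all correct.
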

\begin{figure}[h]
\begin{center}
\includegraphics{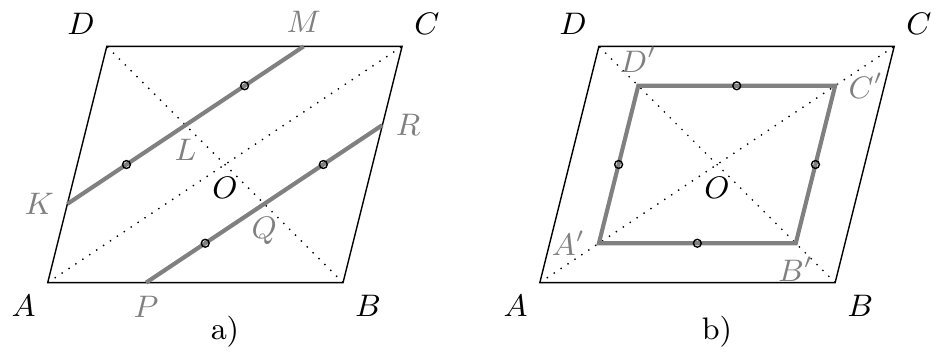}
\caption{Lower bounds for parallelogram}%
\label{fig:parallelograms}%
\end{center}
\end{figure}
\begin{remark}\label{rem:about f(O)}
	If $f$ is convex on the parallelogram, then obviously is convex an all four triangles. The converse does not hold. In both cases one can easily conclude the inequality
	$$\frac{1}{4}\sum_{k=1}^4 f(O_k)\leq\Avg(f,ABCD),$$
	where $O_k$'s are the barycenters of the triangles. This inequality in case of convex $f$ yields $f(O)\leq \Avg(f,ABCD)$. In the case of convexity on triangles only this may not be true (consider the quadrilateral $\{(x,y): |x|+|y|=1\}$ and $f(x,y)=-|y|$).
\end{remark}

\section{Fans and $n$-gons} \label{sec:ngons}

We shall call \textit{nice}  a star-shaped polygon $P$ with vertices $A_0,\dots,A_{n-1}$ satisfying the following condition: there exists a point $O$ called \textit{center} in the kernel of $P$ such that all triangles $OA_kA_{k+1},\ k=0,\dots,n-1$ are of the same area. If additionally all segments $A_kA_{k+1}$ are of the same length, then we shall call it \textit{very nice}.\\
Regular $n$-gons are very nice, but the class of nice and very nice polygons is much broader.

\begin{theorem}\label{thm:poly}
	If $P$ is a nice polygon  and $f:P\to\RR$ is convex on every triangle formed by its center $O$ and two consecutive vertices, then
	\begin{align}
	\Avg({f,P})&\leq \frac{1}{3}f(O)+\frac{2}{3}\cdot\frac{1}{n}\sum_{k=0}^{n-1}\Avg(f,A_kA_{k+1})\label{neq:poly1},
	\\
	\Avg({f,P})&\leq \frac{1}{3}\cdot\frac{1}{n}\sum_{k=0}^{n-1}f(A_k)+\frac{2}{3}\cdot\frac{1}{n}\sum_{k=0}^{n-1}\Avg(f,OA_k),
	\label{neq:poly2}\\
	\Avg({f,P})&\leq \frac{1}{3}f(O)+\frac{2}{3}\cdot\frac{1}{n}\sum_{k=0}^{n-1}f(A_k).
	\label{neq:poly}
	\end{align}
	If additionally $A_k'=h_O^{2/3}(A_k)$ for $k=0,\dots,n-1$, then
	\begin{equation}
	\frac{1}{n}\sum_{k=0}^{n-1}\Avg(f,A_k'A_{k+1}')\leq \Avg(f,P)
	\label{neq:polyinv}
	\end{equation}
	(see Figure \ref{fig:HermiteHadamardInequalityInNGon}).
\end{theorem}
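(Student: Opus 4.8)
The plan is to exploit the decomposition of a nice polygon into the $n$ triangles $OA_kA_{k+1}$ (indices taken modulo $n$). Since $O$ lies in the kernel, these triangles tile $P$ with pairwise disjoint interiors, and since $P$ is nice they all have the same area $\tfrac1n\Area(P)$. Consequently the integral of $f$ over $P$ splits as the sum of the integrals over the triangles, and dividing by $\Area(P)$ yields the master identity
$$\Avg(f,P)=\frac{1}{n}\sum_{k=0}^{n-1}\Avg(f,OA_kA_{k+1}).$$
Each of the four asserted inequalities will then be obtained by estimating a single summand $\Avg(f,OA_kA_{k+1})$ with one of the two quoted tools and averaging over $k$; the hypothesis that $f$ is convex on every triangle $OA_kA_{k+1}$ is exactly what makes Theorems \ref{th:R} and \ref{th:L} applicable termwise.

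For \eqref{neq:poly1} I would apply Theorem \ref{th:R} to each triangle with the partition $K=\{O\}$, $L=\{A_k,A_{k+1}\}$, giving $\Avg(f,OA_kA_{k+1})\le\tfrac13 f(O)+\tfrac23\Avg(f,A_kA_{k+1})$, and then average. For \eqref{neq:poly2} I would instead use the partition $K=\{A_k\}$, $L=\{O,A_{k+1}\}$, which produces $\Avg(f,OA_kA_{k+1})\le\tfrac13 f(A_k)+\tfrac23\Avg(f,OA_{k+1})$; summing over the cyclic index and relabelling $k+1\mapsto k$ in the edge terms turns $\sum_k\Avg(f,OA_{k+1})$ into $\sum_k\Avg(f,OA_k)$ and gives the claim. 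Inequality \eqref{neq:poly} then follows from \eqref{neq:poly1} by bounding each one-dimensional average over the edge $A_kA_{k+1}$ by the right-hand Hermite--Hadamard estimate in \eqref{eq:HH-dim1} (equivalently, Theorem \ref{th:R} on the $1$-simplex $A_kA_{k+1}$), namely $\Avg(f,A_kA_{k+1})\le\tfrac12\bigl(f(A_k)+f(A_{k+1})\bigr)$, after which the same cyclic relabelling collapses $\tfrac1n\sum_k\tfrac12\bigl(f(A_k)+f(A_{k+1})\bigr)$ to $\tfrac1n\sum_k f(A_k)$.

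The lower bound \eqref{neq:polyinv} is where Theorem \ref{th:L} enters, and the one computation that needs care is identifying $\Delta^{[K]}$ for the triangle $OA_kA_{k+1}$ with $K=\{O\}$. Labelling the local vertices $\mathbf{x}_0=O$, $\mathbf{x}_1=A_k$, $\mathbf{x}_2=A_{k+1}$ and reading off the definition of $\Delta^{[K]}$ with $\card K=1$ and $n+1=3$, the two vertices of $\Delta^{[\{O\}]}$ are $\tfrac13 O+\tfrac23 A_k$ and $\tfrac13 O+\tfrac23 A_{k+1}$, which are precisely $h_O^{2/3}(A_k)=A_k'$ and $h_O^{2/3}(A_{k+1})=A_{k+1}'$. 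Hence $\Delta^{[\{O\}]}$ is the segment $A_k'A_{k+1}'$, and the outermost inequality $\Avg(f,\Delta^{[K]})\le\Avg(f,\Delta)$ of Theorem \ref{th:L} gives $\Avg(f,A_k'A_{k+1}')\le\Avg(f,OA_kA_{k+1})$. Averaging over $k$ and invoking the master identity yields \eqref{neq:polyinv}.

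The main obstacle is not any single estimate --- each is essentially a one-line application of a quoted theorem --- but making sure the geometric hypotheses are used correctly: star-shapedness together with the equal-area (nice) condition is what legitimises the master identity with uniform weights $1/n$, and the matching of the homothety $h_O^{2/3}$ with the abstract construction $\Delta^{[\{O\}]}$ is what lets the lower-bound tool speak about the concrete segments $A_k'A_{k+1}'$. Once these identifications are in place, the four inequalities drop out by termwise application and cyclic summation.
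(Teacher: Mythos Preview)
Your proposal is correct and follows essentially the same route as the paper's own proof: decompose $P$ into the equal-area triangles $OA_kA_{k+1}$, apply Theorem \ref{th:R} termwise with the partitions $\{O\}\cup\{A_k,A_{k+1}\}$ for \eqref{neq:poly1} and $\{A_k\}\cup\{O,A_{k+1}\}$ for \eqref{neq:poly2}, deduce \eqref{neq:poly} from \eqref{neq:poly1} via the one-dimensional Hermite--Hadamard inequality, and obtain \eqref{neq:polyinv} from Theorem \ref{th:L} after identifying $\Delta^{[\{O\}]}$ with the segment $A_k'A_{k+1}'$. Your write-up is in fact more explicit than the paper's (which leaves the master identity, the cyclic relabelling, and the $\Delta^{[\{O\}]}$ computation to the reader), but the underlying argument is identical.
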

\begin{proof}
	To obtain \eqref{neq:poly1} apply Theorem \ref{th:R} to vertex $O$ and side $A_kA_{k+1}$, then add up the inequalities. Similarly, for \eqref{neq:poly2} use vertex $A_k$ and side $OA_{k+1}$. The inequality \eqref{neq:poly} can be obtained from \eqref{neq:poly1} or \eqref{neq:poly2} by applying standard Hermite-Hadamard inequalities.\\
	Finally \eqref{neq:polyinv} is  consequence of Theorem \ref{th:L}.
\end{proof}
\begin{remark}
	Every triangle is a nice polygon with its barycenter as $O$.
\end{remark}
\begin{remark}
	In case of a very nice polygon, the inequalities \eqref{neq:poly1} and \eqref{neq:polyinv} can be written as
	\begin{equation*}
	\Avg(f,\partial h_O^{2/3}(P))\leq \Avg(f,P)\leq \frac{1}{3}f(O)+\frac{2}{3}\Avg(f,\partial P).
	\end{equation*}
\end{remark}
Suppose $n$ is even. Then we can group the triangles in pairs to get
\begin{align*}
\Avg(f,OA_kA_{k+1})	&	\leq \frac{1}{3}f(A_{k+1})+\frac{2}{3}\Avg(f,OA_k)\\
\Avg(f,OA_{k-1}A_{k})	&	\leq \frac{1}{3}f(A_{k-1})+\frac{2}{3}\Avg(f,OA_k).
\end{align*}
This shows that the following result holds true.
\begin{theorem}\label{thm:poly2n}
	Under assumptions of Theorem \ref{thm:poly} if the number of vertices of $P$ is even, then
	\begin{align*}
	\Avg(f,P)	&	\leq\frac{1}{3}\frac{1}{n/2}\sum_{k\text{ odd}}f(A_k)+\frac{2}{3}\frac{1}{n/2}\sum_{k\text{ even}}\Avg(f,OA_k),\\
	\Avg(f,P)	&	\leq\frac{1}{3}\frac{1}{n/2}\sum_{k\text{ even}}f(A_k)+\frac{2}{3}\frac{1}{n/2}\sum_{k\text{ odd}}\Avg(f,OA_k),
	\end{align*}
	(see Figure \ref{fig:HermiteHadamardInequalityInNGon}).
\end{theorem}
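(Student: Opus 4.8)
The plan is to reduce the statement entirely to the per-triangle estimate supplied by Theorem~\ref{th:R}, and then to carry out a parity bookkeeping on the $n$ triangles and their vertices. First I would use the defining property of a nice polygon: the $n$ triangles $OA_kA_{k+1}$ (with indices read cyclically modulo $n$) all have equal area, so the average over $P$ is their unweighted mean,
$$\Avg(f,P)=\frac{1}{n}\sum_{k=0}^{n-1}\Avg(f,OA_kA_{k+1}).$$

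Next, for each triangle I would invoke Theorem~\ref{th:R} with the splitting $K=\{A_{k+1}\}$, $L=\{O,A_k\}$ (so $\card K=1$, $\card L=2$, $\card(K\cup L)=3$, and $\Avg(f,\Delta_K)=f(A_{k+1})$). This is legitimate precisely because $f$ is convex on each such triangle, and it produces the first of the two inequalities displayed before the statement; the partner inequality for the triangle $OA_{k-1}A_k$ comes from the analogous splitting $K=\{A_{k-1}\}$, $L=\{O,A_k\}$.

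The combinatorial heart of the argument is the pairing. For the first claimed bound I would group the triangles around each \emph{even}-indexed vertex: for even $k$, add the two inequalities that share the edge $OA_k$,
$$\Avg(f,OA_kA_{k+1})+\Avg(f,OA_{k-1}A_k)\leq\frac{1}{3}\bigl(f(A_{k+1})+f(A_{k-1})\bigr)+\frac{4}{3}\Avg(f,OA_k).$$
Since consecutive indices have opposite parity, each triangle $OA_jA_{j+1}$ is captured exactly once as $k$ runs over the even values (as the first term when $j$ is even, as the second when $j$ is odd), so the left-hand sides total $n\,\Avg(f,P)$; on the right, each odd vertex $A_m$ is produced exactly twice and each even edge $OA_k$ exactly once. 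Dividing by $n$ and rewriting $\tfrac{2}{3n}=\tfrac13\cdot\tfrac{1}{n/2}$ and $\tfrac{4}{3n}=\tfrac23\cdot\tfrac{1}{n/2}$ gives the first inequality; grouping instead around the \emph{odd}-indexed vertices yields the second by the identical count.

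The only genuine obstacle is this indexing step: one must confirm that the cyclic pairing covers every triangle precisely once and every boundary vertex of the chosen parity precisely twice (the parity alternation of consecutive indices, together with $n$ being even, is what makes both counts come out right). Everything else is the already-established Theorem~\ref{th:R} applied term by term, combined with the linearity of the average over the equal-area decomposition of $P$.
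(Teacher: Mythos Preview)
Your proposal is correct and follows essentially the same approach as the paper: apply Theorem~\ref{th:R} to each triangle $OA_kA_{k+1}$ with the splitting $\{A_{k\pm1}\}$ versus $\{O,A_k\}$, pair the two triangles sharing the edge $OA_k$, and average using the equal-area hypothesis. The paper merely displays the two per-triangle inequalities and declares the result; your write-up supplies the parity bookkeeping that the paper leaves implicit.
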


\begin{figure}[htbp]
\begin{center}
\includegraphics{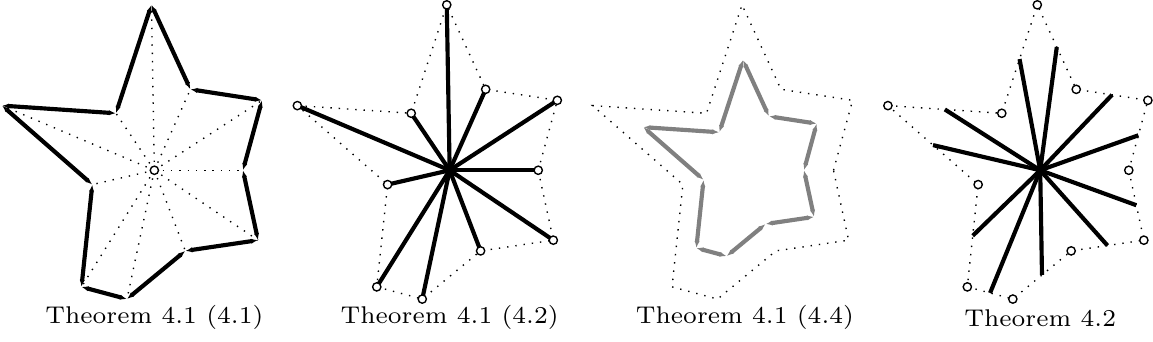}
	\caption{Hermite-Hadamard inequalities for $n$-gon}
	\label{fig:HermiteHadamardInequalityInNGon}
\end{center}
\end{figure}
\begin{remark}	Note that every nice $n$-gon can be considered a nice $2n$-gon by adding the midpoints of its sides to the set of vertices (see Figure \ref{fig:HermiteHadamardInequalityInNGon}).
	
	Remark \ref{rem:about f(O)} remains valid also in this case.
\end{remark}

A fan is a polygon with  vertices $O,A_1,\dots,A_n$ such that all triangles $OA_kA_{k+1}$, $k=1,\dots,n-1$ are of the same orientation and $\sum_{k=1}^{n-1} \angle A_kOA_{k+1}< 2\pi$. For nice fans  we obtain similar results as for nice $n$-gons. We encourage the reader to formulate an equivalent of Theorems \ref{thm:poly} and \ref{thm:poly2n}.

\section{Annulus}
In \cite{Chen} the following version of  Hermite-Hadamard inequality can be found
\begin{theorem}[\cite{Chen}, Th. 2.2]\label{thm:annulus Chen}
Let $U$ be a convex subset of a plane, and $D\subset U$ be an annulus with radii $r<R$ and $C(s)$ denotes a co-centric circle with radius $s$, then for a convex function $f\colon U\to\RR$ hold
$$\Avg\left(f,C\left(\tfrac{2(r^2+rR+R^2)}{3(r+R)}\right)\right)\leq \Avg(f,D)$$
and 
$$\Avg(f,D)\leq \frac{2r+R}{3(r+R)}\Avg(f,C(r))+\frac{r+2R}{3(r+R)}\Avg(f,C(R)).$$
\end{theorem}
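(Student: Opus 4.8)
The plan is to pass to polar coordinates centred at the common centre of the circles $C(s)$ and thereby reduce the two-dimensional statement to a one–dimensional weighted Hermite--Hadamard estimate. Writing a point of the plane as $(s\cos\theta,s\sin\theta)$, I would set
$$\phi(s)=\Avg(f,C(s))=\frac{1}{2\pi}\int_0^{2\pi} f(s\cos\theta,s\sin\theta)\d\theta,\qquad r\le s\le R.$$
First I would check that $\phi$ is convex on $[r,R]$: for each fixed $\theta$ the map $s\mapsto f(s\cos\theta,s\sin\theta)$ is the restriction of the convex function $f$ to the radial segment (an affine image of $[r,R]$ contained in $D\subset U$), hence convex in $s$, and $\phi$ is an average of such functions. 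Next, by Fubini the area integral splits as $\int_D f=\int_r^R\big(\int_{C(s)}f\big)\d s=2\pi\int_r^R s\,\phi(s)\d s$, so that, since $\Area(D)=\pi(R^2-r^2)$,
$$\Avg(f,D)=\frac{2}{R^2-r^2}\int_r^R s\,\phi(s)\d s=\int_{[r,R]}\phi\d\mu,\qquad \d\mu(s)=\frac{2s}{R^2-r^2}\d s.$$
Thus $\Avg(f,D)$ is exactly the integral of the convex profile $\phi$ against the probability measure $\mu$ on $[r,R]$, and the entire theorem collapses to a weighted Hermite--Hadamard inequality for $\phi$.

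For the lower bound I would apply Jensen's inequality to the convex $\phi$ and the measure $\mu$, obtaining $\phi(\bar s)\le\int_{[r,R]}\phi\d\mu=\Avg(f,D)$, where $\bar s$ is the barycentre of $\mu$. A direct moment computation gives
$$\bar s=\int_{[r,R]}s\d\mu(s)=\frac{\int_r^R s^2\d s}{\int_r^R s\d s}=\frac{2(R^3-r^3)}{3(R^2-r^2)}=\frac{2(r^2+rR+R^2)}{3(r+R)},$$
which lies in $[r,R]$ and is precisely the radius in the statement; since $\phi(\bar s)=\Avg(f,C(\bar s))$ this is the asserted inequality. For the upper bound I would use the chord inequality $\phi(s)\le\frac{R-s}{R-r}\phi(r)+\frac{s-r}{R-r}\phi(R)$ for the convex $\phi$ and integrate it against $\mu$; the two resulting coefficients are the $\mu$-means of the barycentric weights, and a short computation yields
$$\int_{[r,R]}\frac{R-s}{R-r}\d\mu(s)=\frac{2r+R}{3(r+R)},\qquad \int_{[r,R]}\frac{s-r}{R-r}\d\mu(s)=\frac{r+2R}{3(r+R)}.$$
Combined with $\phi(r)=\Avg(f,C(r))$ and $\phi(R)=\Avg(f,C(R))$ this is exactly the claimed upper estimate.

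The only genuinely conceptual step is the reduction itself: recognising that $\Avg(f,D)$ is the $\mu$-average of the convex radial profile $\phi$, with the Jacobian $s$ of polar coordinates supplying the weight $\d\mu\propto s\d s$. Once this is in place the proof is an entirely one–dimensional weighted Hermite--Hadamard inequality, whose lower half is Jensen and whose upper half is the secant estimate; the barycentre and moment integrals are routine and are precisely the source of the slightly unusual constants $\tfrac{2(r^2+rR+R^2)}{3(r+R)}$, $\tfrac{2r+R}{3(r+R)}$ and $\tfrac{r+2R}{3(r+R)}$. I do not expect a serious obstacle; the one point demanding a little care is that the weight is $s\d s$ rather than Lebesgue measure, so this is \emph{not} a direct instance of Theorem~\ref{th:R} or Theorem~\ref{th:L} and the moments must be computed by hand. (Alternatively one may keep $\theta$ fixed, apply the one-dimensional estimates to the convex profile $g_\theta(s)=f(s\cos\theta,s\sin\theta)$, and integrate over $\theta$ at the end; this bypasses the convexity of $\phi$ but is otherwise identical.)
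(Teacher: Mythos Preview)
Your argument is correct. Passing to polar coordinates and setting $\phi(s)=\Avg(f,C(s))$ gives a convex function on $[r,R]$ (as an average of affine restrictions of $f$), and $\Avg(f,D)=\int\phi\,\d\mu$ with $\d\mu=\tfrac{2s}{R^2-r^2}\d s$. Jensen for the barycentre $\bar s$ and the secant bound integrated against $\mu$ then give precisely the two inequalities, and your moment computations are right.

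Note, however, that the paper does \emph{not} prove this statement at all: Theorem~\ref{thm:annulus Chen} is quoted verbatim from Chen's paper and then \emph{improved} in Theorems~\ref{the:annulus usL} and~\ref{the:annulus usR}. The methodology the paper uses for those improvements is quite different from yours: instead of reducing to a one-dimensional weighted Hermite--Hadamard inequality via the radial profile, it approximates the annulus by the region $D_n$ between two staggered regular $n$-gons, triangulates $D_n$ into $2n$ isosceles triangles $\mathcal{K}_k^n,\mathcal{L}_k^n$, applies the two-dimensional simplex inequalities (Theorems~\ref{th:R} and~\ref{th:L}, or \eqref{eq:HH-Bess}/\eqref{eq:HHstronger}) to each triangle, and passes to the limit $n\to\infty$. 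That approach is more in the spirit of the rest of the paper (everything is pushed back to simplices) and, because the triangulation straddles the two boundary circles, it naturally produces the sharper two-term lower bound of Theorem~\ref{the:annulus usL}. Your polar-coordinate reduction is shorter and more transparent for the specific constants in Chen's theorem, but it does not by itself see the refinement: to recover Theorem~\ref{the:annulus usL} along your lines one would have to apply a midpoint-type refinement of Jensen to the weighted integral $\int_r^R\phi\,\d\mu$, which is a further (though still elementary) step. Your parenthetical remark that this is not a direct instance of Theorem~\ref{th:R} or~\ref{th:L} is exactly to the point.
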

We shall improve this result. 
For $n>4$ let $A_k^n, k=0,\dots,n-1$ be the vertices of a regular $n$-gon inscribed in $C(R)$ and $B_k^n, k=0,\dots,n-1$ be the vertices of a regular $n$-gon inscribed in $C(r)$ and rotated anticlockwise by $\frac{\pi}{n}$. The two polygons bound the area $D_n$, and divide it into $2n$ isosceles triangles $\mathcal{K}_k^n=A_k^nA_{k+1}^nB_k^n$ and $\mathcal{L}_k^n=B_k^nB_{k+1}^nA_{k+1}^n$. We have

\begin{gather}
\Area{\mathcal{K}_k^n}=R\sin\frac{\pi}{n}\left(R\cos\frac{\pi}{n}-r \right),\quad \Area{\mathcal{L}_k^n}=r\sin\frac{\pi}{n}\left(R-r\cos\frac{\pi}{n}\right),\label{eq:annulus area1}\\
\Area{D_n}=\sum_{k=0}^{n-1}\left(\Area{\mathcal{K}_k^n}+\Area{\mathcal{L}_k^n}\right) =n\sin\frac{\pi}{n}\cos\frac{\pi}{n}(R^2-r^2).\label{eq:annulus area2}
\end{gather}
Denote by $K_k^n,L_k^n$ the barycenters of $\mathcal{K}_k^n$ and $\mathcal{L}_k^n$. Applying the Hermite-Hadamard inequality, equations \eqref{eq:annulus area1} and \eqref{eq:annulus area2} we obtain
\begin{align}\label{eq:average of D_n}
	\Avg(f,D_n)&=\frac{1}{\Area{D_n}}\sum_{k=0}^{n-1}\left(\int_{\mathcal{K}_k^n} f(\mathbf{x})\d{\mathbf{x}}+\int_{\mathcal{L}_k^n} f(\mathbf{x})\d{\mathbf{x}}\right)	\notag\\
&=\sum_{k=0}^{n-1}\left(\frac{\Area{\mathcal{K}_k^n}}{\Area{D_n}}\Avg{(f,\mathcal{K}_k^n)}+\frac{\Area{\mathcal{L}_k^n}}{\Area{D_n}}\Avg{(f,\mathcal{L}_k^n)}\right)\notag	\\
	&\geq \frac{R\left(R\cos\frac{\pi}{n}-r \right)}
	{\cos\frac{\pi}{n}(R^2-r^2)}
	\frac{1}{n}\sum_{k=0}^{n-1} f(K_k^n)
	+\frac{r\left(R-r\cos\frac{\pi}{n}\right)}
	{\cos\frac{\pi}{n}(R^2-r^2)}
	\frac{1}{n}\sum_{k=0}^{n-1} f(L_k^n).
\end{align}
As $n$ tends to infinity the two regular polygons with vertices $K_k^n$ and $L_k^n$ respectively approach the circles of radii $\frac{1}{3}(2R+r)$ and  $\frac{1}{3}(R+2r)$, and the arithmetic means in \eqref{eq:average of D_n} tend to averages of $f$ over these circles, so we have proven the following fact.

\begin{theorem}\label{the:annulus usL}
	Under the assumptions of Theorem \ref{thm:annulus Chen} the inequality
	$$\frac{R}{r+R}\Avg\left(f,C\left(\frac{r+2R}{3}\right)\right)+\frac{r}{r+R}\Avg\left(f,C\left(\frac{2r+R}{3}\right)\right)\leq\Avg(f,D)$$ holds.
\end{theorem}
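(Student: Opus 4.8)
The plan is to realize the annulus $D$ as the limit of the polygonal regions $D_n$ constructed above and to push the left-hand (barycenter) Hermite--Hadamard estimate through triangle by triangle. Concretely, for each of the $2n$ triangles the left inequality of \eqref{eq:HH-Bess} (the case $f(\mathbf{b})\leq\Avg(f,\Delta)$, which also follows from Theorem \ref{th:L}) gives $f(K_k^n)\leq\Avg(f,\mathcal{K}_k^n)$ and $f(L_k^n)\leq\Avg(f,\mathcal{L}_k^n)$; here convexity of $f$ on each triangle is inherited from convexity on $U$, since the vertices lie in $\overline D\subset U$ and $U$ is convex. Weighting these by the area fractions from \eqref{eq:annulus area1}--\eqref{eq:annulus area2} is exactly what produces the lower bound \eqref{eq:average of D_n} for $\Avg(f,D_n)$, so it remains only to let $n\to\infty$ and identify the three limits occurring there.

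First I would dispatch the coefficients, which is routine since $\cos\frac\pi n\to1$:
$$\frac{R\left(R\cos\frac\pi n-r\right)}{\cos\frac\pi n\,(R^2-r^2)}\longrightarrow\frac{R(R-r)}{R^2-r^2}=\frac{R}{r+R},\qquad \frac{r\left(R-r\cos\frac\pi n\right)}{\cos\frac\pi n\,(R^2-r^2)}\longrightarrow\frac{r}{r+R}.$$
Next I would locate the barycenters. A short computation in polar form, using that two neighbouring outer vertices sum to a vector pointing in the direction of the intervening inner vertex, shows that $K_k^n=\tfrac13(A_k^n+A_{k+1}^n+B_k^n)$ lies on the circle of radius $\tfrac13\bigl(2R\cos\frac\pi n+r\bigr)$ and that the $n$ points are equally spaced on it; likewise $L_k^n$ lies on the circle of radius $\tfrac13\bigl(2r\cos\frac\pi n+R\bigr)$, equally spaced. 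As $n\to\infty$ these radii tend to $\tfrac13(2R+r)$ and $\tfrac13(R+2r)$, and since $f$ is continuous (being convex on the convex set $U$) the two arithmetic means in \eqref{eq:average of D_n} are Riemann sums converging to the circle averages $\Avg\bigl(f,C(\tfrac{r+2R}{3})\bigr)$ and $\Avg\bigl(f,C(\tfrac{2r+R}{3})\bigr)$.

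The step I expect to require the most care is the convergence $\Avg(f,D_n)\to\Avg(f,D)$ on the left, because $D_n$ is not contained in $D$: the symmetric difference $D_n\triangle D$ consists of the thin circular segments between each chord of the two inscribed polygons and the corresponding arc of $C(r)$ or $C(R)$. However, \eqref{eq:annulus area2} gives $\Area{D_n}\to\Area{D}=\pi(R^2-r^2)$, so the total area of this symmetric difference tends to $0$, while $f$ is bounded on a compact neighbourhood of the closed annulus; hence $\int_{D_n}f\to\int_D f$ and the averages converge. Combining the three limits with \eqref{eq:average of D_n} yields
$$\frac{R}{r+R}\Avg\Bigl(f,C\bigl(\tfrac{r+2R}{3}\bigr)\Bigr)+\frac{r}{r+R}\Avg\Bigl(f,C\bigl(\tfrac{2r+R}{3}\bigr)\Bigr)\leq\Avg(f,D),$$
which is the asserted inequality.
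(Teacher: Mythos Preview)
Your proof is correct and follows exactly the approach the paper takes: apply the barycenter Hermite--Hadamard inequality triangle by triangle to obtain \eqref{eq:average of D_n}, then pass to the limit $n\to\infty$. You have in fact supplied more detail than the paper does---the explicit radii $\tfrac13(2R\cos\frac{\pi}{n}+r)$ and $\tfrac13(2r\cos\frac{\pi}{n}+R)$ for the barycenters, and the symmetric-difference argument for $\Avg(f,D_n)\to\Avg(f,D)$---which the paper leaves implicit.
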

Similar reasoning and the strengthened version of the Hermite-Hadamard inequality \eqref{eq:HHstronger}
applied to $\mathcal{K}_k^n$ and $\mathcal{L}_k^n$ produce a better right bound.
\begin{theorem}\label{the:annulus usR}
	Under the assumptions of Theorem \ref{thm:annulus Chen} the inequality
	\begin{align*}
	\Avg(f,D)	&	\leq\frac{1}{3}\left[\frac{R}{r+R}\Avg\left(f,C\left(\frac{r+2R}{3}\right)\right)+\frac{r}{r+R}\Avg\left(f,C\left(\frac{2r+R}{3}\right)\right)\right]\\
		&	\phantom{\leq}+\frac{2}{3}\left[\frac{r+2R}{3(r+R)}\Avg(f,C(R))+\frac{2r+R}{3(r+R)}\Avg(f,C(r))\right]
	\end{align*} holds.
	
\end{theorem}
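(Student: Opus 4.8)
The plan is to rerun the argument used for Theorem~\ref{the:annulus usL} verbatim, except that each of the triangles $\mathcal{K}_k^n$ and $\mathcal{L}_k^n$ is now fed into the strengthened Hermite--Hadamard inequality \eqref{eq:HHstronger} instead of the one-sided barycenter estimate. Since these triangles are planar simplices (so $n=2$ in \eqref{eq:HHstronger}) with barycenters $K_k^n$ and $L_k^n$, I would first record
\begin{align*}
\Avg(f,\mathcal{K}_k^n)&\leq\tfrac13 f(K_k^n)+\tfrac29\bigl(f(A_k^n)+f(A_{k+1}^n)+f(B_k^n)\bigr),\\
\Avg(f,\mathcal{L}_k^n)&\leq\tfrac13 f(L_k^n)+\tfrac29\bigl(f(B_k^n)+f(B_{k+1}^n)+f(A_{k+1}^n)\bigr),
\end{align*}
and then substitute these into the exact area-weighted identity \eqref{eq:average of D_n} for $\Avg(f,D_n)$, recalling from \eqref{eq:annulus area1}--\eqref{eq:annulus area2} that all $\mathcal{K}$-triangles share one area and all $\mathcal{L}$-triangles another.

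The resulting upper bound for $\Avg(f,D_n)$ splits into a \emph{barycenter part} and a \emph{vertex part}. The barycenter part is exactly $\tfrac13$ times the expression whose limit was already computed in the proof of Theorem~\ref{the:annulus usL}; hence it converges to
$$\tfrac13\left[\tfrac{R}{r+R}\Avg\left(f,C\left(\tfrac{r+2R}{3}\right)\right)+\tfrac{r}{r+R}\Avg\left(f,C\left(\tfrac{2r+R}{3}\right)\right)\right],$$
which is precisely the first bracket of the claimed inequality. For the vertex part the key step is a cyclic reindexing that collects, for each boundary value, the total weight with which it appears. Each outer vertex $A_k^n$ belongs to the two triangles $\mathcal{K}_{k-1}^n,\mathcal{K}_k^n$ and to $\mathcal{L}_{k-1}^n$, while each inner vertex $B_k^n$ belongs to $\mathcal{K}_k^n$ and to the two triangles $\mathcal{L}_{k-1}^n,\mathcal{L}_k^n$; consequently the outer values accumulate the weight $2\Area{\mathcal{K}_k^n}+\Area{\mathcal{L}_k^n}$ and the inner values the weight $\Area{\mathcal{K}_k^n}+2\Area{\mathcal{L}_k^n}$, each divided by $3\,\Area{D_n}$.

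It then remains to pass to the limit $n\to\infty$. By \eqref{eq:annulus area1}--\eqref{eq:annulus area2} the normalized weights $n\,\Area{\mathcal{K}_k^n}/\Area{D_n}$ and $n\,\Area{\mathcal{L}_k^n}/\Area{D_n}$ tend to $R/(r+R)$ and $r/(r+R)$, while $\tfrac1n\sum_{k}f(A_k^n)\to\Avg(f,C(R))$ and $\tfrac1n\sum_{k}f(B_k^n)\to\Avg(f,C(r))$ because the vertices are equally spaced and $f$ is continuous. Combining these, the outer and inner combined weights collapse to $\tfrac{r+2R}{3(r+R)}$ and $\tfrac{2r+R}{3(r+R)}$, so the vertex part converges to
$$\tfrac23\left[\tfrac{r+2R}{3(r+R)}\Avg(f,C(R))+\tfrac{2r+R}{3(r+R)}\Avg(f,C(r))\right].$$
Since $\Avg(f,D_n)\to\Avg(f,D)$ as the polygonal annuli exhaust $D$, taking limits in $\Avg(f,D_n)\le(\text{barycenter part})+(\text{vertex part})$ yields the assertion.

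The only genuinely delicate point is the bookkeeping of the vertex weights through the cyclic reindexing, together with the verification that the limiting coefficients collapse \emph{exactly} to $\tfrac{r+2R}{3(r+R)}$ and $\tfrac{2r+R}{3(r+R)}$. The limit passages themselves are routine consequences of the continuity of $f$ and of the convergence of the equally spaced Riemann sums already exploited for Theorem~\ref{the:annulus usL}.
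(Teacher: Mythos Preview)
Your proposal is correct and follows exactly the route the paper indicates: apply the strengthened simplex inequality \eqref{eq:HHstronger} to each triangle $\mathcal{K}_k^n$ and $\mathcal{L}_k^n$, insert the resulting bounds into the area-weighted identity from \eqref{eq:average of D_n}, and pass to the limit. The paper merely says ``similar reasoning and the strengthened version of the Hermite--Hadamard inequality \eqref{eq:HHstronger} applied to $\mathcal{K}_k^n$ and $\mathcal{L}_k^n$ produce a better right bound''; you have supplied the bookkeeping (in particular the cyclic reindexing giving the weights $2\Area{\mathcal{K}_k^n}+\Area{\mathcal{L}_k^n}$ and $\Area{\mathcal{K}_k^n}+2\Area{\mathcal{L}_k^n}$, and the verification that these collapse to $\tfrac{r+2R}{3(r+R)}$ and $\tfrac{2r+R}{3(r+R)}$), all of which is accurate.
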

\begin{remark}
	Suppose the function $f$ is such there exist a point $O\in U$ and   half-lines starting from $O$ such that $f$ is convex in each sector bounded by them. Then, if $O$ is the center of $D$, the inequalities in \eqref{eq:average of D_n} are  valid for all triangles except these intersecting with the sectors' boundaries. Thus they can be neglected as $n$ tends to infinity, and the Theorems \ref{the:annulus usL} and \ref{the:annulus usR} remain valid for $f$.
\end{remark}
\section{Platonic bodies and related polytopes}\label{sec:Platonic}
Let $B\subset\RR^3$ be a platonic body  inscribed in a sphere with center $O$. Define the following sets:
\begin{itemize}
	
	\item $\mathcal{S}$ - set of segments joining $O$ with vertices of $B$
	\item $\mathcal{O}$ - set of segments joining $O$ with centers of faces
	\item $\mathcal{E}$ - set of edges of $B$
	\item $\mathcal{D}$ - set of segments joining centers of faces with their vertices.  
\end{itemize}
 
\begin{theorem}\label{thm:platonic}
	Let $f\colon B\to\RR$ be a function such that its restriction to every pyramid formed by a face as a base and $O$ as its apex is convex. Then with the above notation the following inequalities hold:
\begin{align}
	\Avg(f,B)&\leq \frac{1}{4}f(O)+\frac{3}{4}\Avg(f,\partial B),	\label{eq:Plato_1}\\
	\Avg(f,B)&\leq \frac{1}{2}\Avg(f,\mathcal{O})+\frac{1}{2}\Avg(f,\mathcal{E}),	\label{eq:Plato_2}\\
	\Avg(f,B)&\leq \frac{1}{2}\Avg(f,\mathcal{S})+\frac{1}{2}\Avg(f,\mathcal{D}),	\label{eq:Plato_3}
\intertext{and}
	\Avg(f,\partial h_O^{3/4}(B))&\leq \Avg(f,B)\label{eq:Plato_4}.
\end{align}
\end{theorem}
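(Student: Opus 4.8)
The plan is to reduce all four inequalities to a single triangulation of $B$ into congruent tetrahedra, and then to feed the four vertices of each piece into Theorems~\ref{th:R} and~\ref{th:L} with suitably chosen partitions. First I would fix, for each face $F$ with center $c$ and cyclically ordered vertices $V_0,\dots,V_{m-1}$, the tetrahedra $\Delta_j^F=\conv\{O,c,V_j,V_{j+1}\}$ for $j=0,\dots,m-1$. By the symmetry of a Platonic body each such piece has base triangle $cV_jV_{j+1}$ of area $\tfrac1m\Area F$ and height equal to the common inradius, so all pieces have equal volume; they tile $B$, and each lies inside the pyramid over $F$, whence $f$ restricted to it is convex. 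Consequently
\[
\Avg(f,B)=\frac1M\sum_{F}\sum_{j}\Avg(f,\Delta_j^F),\qquad M=Nm,
\]
where $N$ is the number of faces. This one decomposition serves all four inequalities; for triangular faces it simply cuts each face-pyramid into $m=3$ sub-tetrahedra.

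For the three upper bounds I would apply Theorem~\ref{th:R} to each $\Delta_j^F$ with a different $1{+}3$ or $2{+}2$ split of its vertex set $\{O,c,V_j,V_{j+1}\}$ and then sum. The choice $K=\{O\}$, $L=\{c,V_j,V_{j+1}\}$ gives $\Avg(f,\Delta_j^F)\le\tfrac14 f(O)+\tfrac34\Avg(f,cV_jV_{j+1})$, and since the base triangles tile $\partial B$ with equal areas this sums to \eqref{eq:Plato_1}. The choice $K=\{O,c\}$, $L=\{V_j,V_{j+1}\}$ replaces the bound by $\tfrac12\Avg(f,Oc)+\tfrac12\Avg(f,V_jV_{j+1})$, whose two segments belong to $\mathcal O$ and $\mathcal E$, giving \eqref{eq:Plato_2}. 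The choice $K=\{O,V_j\}$, $L=\{c,V_{j+1}\}$ gives $\tfrac12\Avg(f,OV_j)+\tfrac12\Avg(f,cV_{j+1})$, with segments in $\mathcal S$ and $\mathcal D$, giving \eqref{eq:Plato_3}.

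For the lower bound \eqref{eq:Plato_4} I would invoke the rightmost inequality of Theorem~\ref{th:L}, namely $\Avg(f,\Delta^{[K]})\le\Avg(f,\Delta)$, with $K=\{O\}$ applied to each piece $\Delta=\Delta_j^F$. Writing out the defining formula of $\Delta^{[K]}$ with $x_0=O$ and $n=3$ shows each of its vertices equals $\tfrac14 O+\tfrac34 V=h_O^{3/4}(V)$, so $\Delta^{[\{O\}]}=h_O^{3/4}(cV_jV_{j+1})$ and the inequality reads $\Avg(f,h_O^{3/4}(cV_jV_{j+1}))\le\Avg(f,\Delta_j^F)$. The scaled triangles tile $\partial h_O^{3/4}(B)$ with equal areas, so averaging over all pieces produces \eqref{eq:Plato_4}.

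The part that needs care — and the main obstacle — is the combinatorial bookkeeping that turns the summed per-piece bounds into clean averages over $\mathcal O,\mathcal E,\mathcal S,\mathcal D$ and $\partial B$. After summing I must divide by $M$ and track each family's multiplicity: each edge $V_jV_{j+1}$ lies in two pieces, each segment $Oc$ in $m$ pieces, each segment $OV$ in $q$ pieces ($q$ the vertex degree), whereas each segment $cV$ and each base triangle occurs exactly once. Together with the Platonic symmetry — all tetrahedra of equal volume, all faces of equal area, and each of $\mathcal O,\mathcal E,\mathcal S,\mathcal D$ consisting of mutually congruent, hence equal-length, segments — and the incidence identities $M=Nm=2|\mathcal E|=q|\mathcal S|=|\mathcal D|$, every weighted sum collapses to the plain average over the corresponding set and the coefficients $\tfrac14,\tfrac34,\tfrac12$ survive unchanged.
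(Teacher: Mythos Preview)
Your proposal is correct and follows essentially the same approach as the paper: both triangulate $B$ into the congruent tetrahedra $\conv\{O,c,V_j,V_{j+1}\}$ and apply Theorems~\ref{th:R} and~\ref{th:L} with exactly the same vertex partitions you chose. The paper simply writes ``summing over $k$ and $F$'' where you spell out the multiplicity bookkeeping, but the argument is identical.
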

\begin{proof}
	Let $F$ be a face of $B$ with vertices $A_0,\dots,A_{n-1}$ and $O'$ be its circumcenter. Split the pyramid $FO$ into simplices $OO'A_kA_{k+1}$. Applying Theorem \ref{th:R} we obtain
	$$\Avg{(f,OO'A_kA_{k+1})}\leq \frac{1}{4}f(O)+\frac{3}{4}\Avg(f,O'A_kA_{k+1}).$$
	Summing over $k$ and $F$ we obtain \eqref{eq:Plato_1}.
	The inequalities
	$$\Avg(f,OO'A_kA_{k+1})\leq\frac{1}{2}\Avg(f,OO')+\frac{1}{2}\Avg(f,A_kA_{k+1})$$
	lead to \eqref{eq:Plato_2}, while
	$$\Avg{(f,OO'A_kA_{k+1})}\leq\frac{1}{2}\Avg(f,OA_k)+\frac{1}{2}\Avg(f,O'A_{k+1})$$
	give \eqref{eq:Plato_3}.
	Finally Theorem \ref{th:L} leads to the inequalities 
	$$\Avg{(f,h_O^{3/4}(O'A_kA_{k+1}))}\leq \Avg{(f,OO'A_kA_{k+1})}$$
	that finally yield \eqref{eq:Plato_4}. 
\end{proof}

Based on a platonic body $B$ we can built a new polytope $B^*$ in the following way: on every face $F$ of $B$ we build or excavate a regular pyramid  of the same height with apex $O_F$. If we denote by 
 \begin{itemize}
	\item $\mathcal{S}$ - set of segments joining $O$ with vertices of $B$,
	\item $\mathcal{O}^*$ - set of segments joining $O$ with $O_F$'s,
	\item $\mathcal{E}$ - set of edges of $B$,
	\item $\mathcal{D}^*$ - set of segments joining $O_F$'s of the pyramids with  vertices of $F$, 
\end{itemize}
then the same reasoning as above shows that the next theorem is valid.
\begin{theorem}
	Let $f\colon B^*\to\RR$ be a function such that its restriction to every tetrahedron formed by $O$, $O_F$ and two adjacent vertices of $F$ . Then with the above notation the following inequalities hold:
\begin{align*}
	\Avg(f,B^*)&\leq \frac{1}{4}f(O)+\frac{3}{4}\Avg(f,\partial B^*),	\\
	\Avg(f,B^*)&\leq \frac{1}{2}\Avg(f,\mathcal{O}^*)+\frac{1}{2}\Avg(f,\mathcal{E}),	\\
	\Avg(f,B^*)&\leq \frac{1}{2}\Avg(f,\mathcal{S})+\frac{1}{2}\Avg(f,\mathcal{D}^*),	
\intertext{and}
	\Avg(f,\partial h_O^{3/4}(B^*))&\leq \Avg(f,B^*).
\end{align*}
\end{theorem}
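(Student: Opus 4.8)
The plan is to rerun the argument that proves Theorem~\ref{thm:platonic} almost verbatim, with the only change being that the flat face centre $O'$ is replaced by the pyramid apex $O_F$. First I would fix one face $F$ of $B$ with vertices $A_0,\dots,A_{n-1}$ together with its associated apex $O_F$, and observe that the portion of $B^*$ erected over (or excavated under) $F$ is exactly the bipyramid with apices $O$ and $O_F$ and equatorial polygon $A_0\dots A_{n-1}$. This bipyramid is tiled, with pairwise disjoint interiors, by the tetrahedra $T_k=OO_FA_kA_{k+1}$, $k=0,\dots,n-1$, each of which spans the axis $OO_F$ and one edge $A_kA_{k+1}$ of the equator; and $B^*$ is the union of these bipyramids over all faces. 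By hypothesis $f$ restricts to a convex function on every $T_k$, so Theorems~\ref{th:R} and~\ref{th:L} are available on each of them.

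The decisive preliminary fact is that all the tetrahedra $T_k$ are mutually congruent, so in particular they share a common volume and their outer faces $O_FA_kA_{k+1}$ share a common area. Within a single face this holds because the erected pyramid is regular, so $O_F$ sits above the circumcentre $O'$ of the regular polygon $F$ and the $T_k$ are carried into one another by the rotational symmetry of the face; across faces it holds because the symmetry group of the platonic body $B$ permutes its faces transitively and hence permutes the tetrahedra. Consequently $\Avg(f,B^*)$ is the unweighted average of the $\Avg(f,T_k)$, and likewise $\Avg(f,\partial B^*)$ is the unweighted average of the $\Avg(f,O_FA_kA_{k+1})$, which is what makes every summation below go through cleanly.

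With this in hand I would apply Theorem~\ref{th:R} to $T_k$ under the three partitions of its vertex set. The split $\{O\}\cup\{O_F,A_k,A_{k+1}\}$ yields $\Avg(f,T_k)\le\frac14 f(O)+\frac34\Avg(f,O_FA_kA_{k+1})$, and since the triangles $O_FA_kA_{k+1}$ are precisely the lateral faces making up $\partial B^*$, summing gives the first inequality. The split $\{O,O_F\}\cup\{A_k,A_{k+1}\}$ produces the second, the segments $OO_F$ constituting $\mathcal{O}^*$ and the edges $A_kA_{k+1}$ constituting $\mathcal{E}$; and the split $\{O,A_k\}\cup\{O_F,A_{k+1}\}$ produces the third, the segments $OA_k$ constituting $\mathcal{S}$ and the $O_FA_{k+1}$ constituting $\mathcal{D}^*$. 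For the lower bound I would invoke Theorem~\ref{th:L} with the singleton $K=\{O\}$, which identifies the corresponding cross-section $\Delta^{[K]}$ with $h_O^{3/4}(O_FA_kA_{k+1})$ and gives $\Avg(f,h_O^{3/4}(O_FA_kA_{k+1}))\le\Avg(f,T_k)$; summing over all tetrahedra and noting that $\partial h_O^{3/4}(B^*)$ is the union of these uniformly scaled triangles yields the final inequality.

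The main obstacle I anticipate is the bookkeeping in the excavation case, where $O_F$ lies between $O$ and the face rather than beyond it. There the relevant solid is the set difference of the two pyramids $OF\setminus O_FF$ rather than a genuine bipyramid, and one must verify that the tetrahedra $T_k$ still tile it without overlap and still carry a common volume, so that the unweighted averaging of the $\Avg(f,T_k)$ remains legitimate; the orientations of the $T_k$ have to be tracked with care here. Once this equal-volume tiling is confirmed in both the built and excavated cases, the three upper bounds and the single lower bound follow by exactly the summation used for Theorem~\ref{thm:platonic}.
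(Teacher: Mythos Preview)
Your proposal is correct and follows exactly the approach the paper intends: the paper does not give a separate proof but simply states that ``the same reasoning as above shows that the next theorem is valid,'' i.e.\ one reruns the proof of Theorem~\ref{thm:platonic} with the face centre $O'$ replaced by the apex $O_F$. Your identification of the three vertex partitions for Theorem~\ref{th:R} and the use of Theorem~\ref{th:L} with $K=\{O\}$ matches the original argument line for line, and your caution about the excavation case is well placed but ultimately harmless, since the tetrahedra $OO_FA_kA_{k+1}$ arise there as the stellar subdivision of the pyramid $OF$ from the interior point $O_F$, so they still tile the relevant solid with equal volumes.
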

\section{Dipyramid and Dicone}

Let $P$ be a regular, convex $n$-gon with vertices $A_0,\dots,A_{n-1}$. Suppose $X$ is a point on the line $l$ perpendicular to the  plane containing $P$ and passing through its center.
The set $\mathcal{U}_X=\bigcup_{k=0}^{n-1} XA_kA_{k+1}$ will be called a \textit{Chinese umbrella with vertex $X$}. The umbrella's scaffold will be denoted by $\mathcal{S}_X=\bigcup_{k=0}^{n-1} XA_k$. \\
By a \textit{dipyramid} with vertices $O_0, O_1$ we mean the body $\mathcal{D}$ bounded by two Chinese umbrellas  $\mathcal{U}_{O_0}$ and $\mathcal{U}_{O_1}$. The dipyramid may be convex or not, depending on the position of its vertices with respect to the plane of the polygon. 

The following result is a consequence of Theorems \ref{th:R} and \ref{th:L}.
\begin{theorem}\label{thm:dipyramid1}
	Let $\mathcal{D}$ be a dipyramid with vertices $O_0$ and $O_1$, and let $f:\mathcal{D}\to\RR$ be a function that is convex on every simplex $O_0O_1A_kA_{k+1},\ k=0,\dots,n-1$. Then
	\begin{align}
	\Avg(f,\mathcal{D})	&\leq \tfrac{1}{4}f(O_0)+\tfrac{3}{4}\Avg(f,\mathcal{U}_{O_1})	\label{eq:dipyramidO_0},\\
	\Avg(f,\mathcal{D})	&\leq \tfrac{1}{4}f(O_1)+\tfrac{3}{4}\Avg(f,\mathcal{U}_{O_0})	\label{eq:dipyramidO_1},\\
	\Avg(f,h_{O_0}^{3/4}(\mathcal{U}_{O_1}))&\leq \Avg(f,\mathcal{D}) \label{eq:dipyramidO_0_inv},\\
	\Avg(f,h_{O_1}^{3/4}(\mathcal{U}_{O_0}))&\leq \Avg(f,\mathcal{D}) \label{eq:dipyramidO_1_inv},\\
	\Avg(f,\mathcal{D})	&\leq \tfrac{1}{2}\left(\Avg(f,O_0O_1)+\Avg(f,\partial P)\right)	\label{eq:dipyramidO_0O_1},\\
	\Avg(f,\mathcal{D})	&\leq \tfrac{1}{2}\left(\Avg(f,\mathcal{S}_{O_0})+\Avg(f,\mathcal{S}_{O_1})\right),\label{eq:dipyramid_sticks}	
	\end{align}
	(see Figure \ref{fig:HermiteHadamardInewualitiesForDipyramid}).
\end{theorem}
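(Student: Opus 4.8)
The plan is to decompose the dipyramid $\mathcal{D}$ into the $n$ tetrahedra $\Delta_k=O_0O_1A_kA_{k+1}$, $k=0,\dots,n-1$, on each of which $f$ is convex by hypothesis, and to apply Theorems \ref{th:R} and \ref{th:L} to each $\Delta_k$ with a suitably chosen partition of its four vertices. The crucial preliminary observation is that all $n$ tetrahedra have the \emph{same} volume: placing the center of $P$ at the origin with $l$ as the $z$-axis, a direct computation gives $\Vol{\Delta_k}=\tfrac16|h_1-h_0|\,\rho^2\sin\tfrac{2\pi}{n}$ (where $\rho$ is the circumradius and $h_0,h_1$ the heights of $O_0,O_1$), which is independent of $k$. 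Consequently $\Vol{\Delta_k}=\tfrac1n\Vol{\mathcal{D}}$ and
$$\Avg(f,\mathcal{D})=\frac1n\sum_{k=0}^{n-1}\Avg(f,\Delta_k),$$
so every inequality reduces to summing a per-tetrahedron estimate with equal weights $1/n$.

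For the four upper bounds I would label the vertices $\mathbf{x}_0=O_0,\ \mathbf{x}_1=O_1,\ \mathbf{x}_2=A_k,\ \mathbf{x}_3=A_{k+1}$ (so $n=3$ in the simplex theorems) and feed Theorem \ref{th:R} a different partition in each case. The partition $K=\{0\}$, $L=\{1,2,3\}$ gives $\Avg(f,\Delta_k)\le\tfrac14 f(O_0)+\tfrac34\Avg(f,O_1A_kA_{k+1})$, yielding \eqref{eq:dipyramidO_0} after summation, and the symmetric choice $K=\{1\}$ gives \eqref{eq:dipyramidO_1}. The partition $K=\{0,1\}$, $L=\{2,3\}$ gives $\Avg(f,\Delta_k)\le\tfrac12\Avg(f,O_0O_1)+\tfrac12\Avg(f,A_kA_{k+1})$, which sums to \eqref{eq:dipyramidO_0O_1}; and $K=\{0,2\}$, $L=\{1,3\}$ gives $\Avg(f,\Delta_k)\le\tfrac12\Avg(f,O_0A_k)+\tfrac12\Avg(f,O_1A_{k+1})$, summing to \eqref{eq:dipyramid_sticks}.

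For the lower bounds \eqref{eq:dipyramidO_0_inv} and \eqref{eq:dipyramidO_1_inv} I would invoke Theorem \ref{th:L}. With the labelling above and $K=\{0\}$ (so $\card K=1$, $n+1=4$), the definition of $\Delta^{[K]}$ gives $\mathbf{x}_j^{[\{0\}]}=\tfrac14 O_0+\tfrac34\mathbf{x}_j=h_{O_0}^{3/4}(\mathbf{x}_j)$, hence $\Delta_k^{[\{0\}]}=h_{O_0}^{3/4}(O_1A_kA_{k+1})$. Applying Theorem \ref{th:L} with $\emptyset\subset\{0\}$ yields $\Avg(f,h_{O_0}^{3/4}(O_1A_kA_{k+1}))\le\Avg(f,\Delta_k)$; summing produces \eqref{eq:dipyramidO_0_inv}, and the symmetric $K=\{1\}$ gives \eqref{eq:dipyramidO_1_inv}.

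The one point requiring care—and the main obstacle—is converting each arithmetic mean $\tfrac1n\sum_k\Avg(f,\cdot)$ into the stated average over a union. This works precisely because regularity of $P$ together with the placement of $O_0,O_1$ on the perpendicular axis forces the relevant pieces to be congruent: the umbrella faces $O_iA_kA_{k+1}$ all have equal area (and a homothety preserves this for $h_{O_0}^{3/4}(\mathcal{U}_{O_1})$), the polygon edges $A_kA_{k+1}$ all have equal length, and the scaffold segments $O_iA_k$ all have equal length. Hence each union-average equals the corresponding equally-weighted arithmetic mean, e.g. $\Avg(f,\mathcal{U}_{O_1})=\tfrac1n\sum_k\Avg(f,O_1A_kA_{k+1})$, $\Avg(f,\partial P)=\tfrac1n\sum_k\Avg(f,A_kA_{k+1})$, and $\Avg(f,\mathcal{S}_{O_i})=\tfrac1n\sum_k\Avg(f,O_iA_k)$, and the six inequalities follow.
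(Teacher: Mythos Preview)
Your proof is correct and follows essentially the same approach as the paper: decompose $\mathcal{D}$ into the $n$ congruent tetrahedra $O_0O_1A_kA_{k+1}$, apply Theorem~\ref{th:R} (respectively Theorem~\ref{th:L}) on each with exactly the vertex groupings you list, and average. The paper's proof is terser and omits the verifications you spell out (equal volumes of the $\Delta_k$, equal areas/lengths of the umbrella faces, polygon edges, and scaffold segments), but the underlying argument is identical.
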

\begin{proof}
	Grouping the vertices of the simplex $O_0O_1A_kA_{k+1}$ into  $\{O_0\},\{O_1A_kA_{k+1}\}$ one gets the inequalities \eqref{eq:dipyramidO_0} and \eqref{eq:dipyramidO_0_inv}. Similar split $\{O_1\},\{O_0A_kA_{k+1}\}$ gives \eqref{eq:dipyramidO_1} and \eqref{eq:dipyramidO_1_inv}. \\
	Inequalities \eqref{eq:dipyramidO_0O_1} and \eqref{eq:dipyramid_sticks} follow by grouping them into $\{O_0O_1\}$, $\{A_kA_{k+1}\}$ and $\{O_0A_k\}$, $\{O_1A_{k+1}\}$ respectively.
\end{proof}
\begin{figure}[htbp]
\begin{center}
	\includegraphics{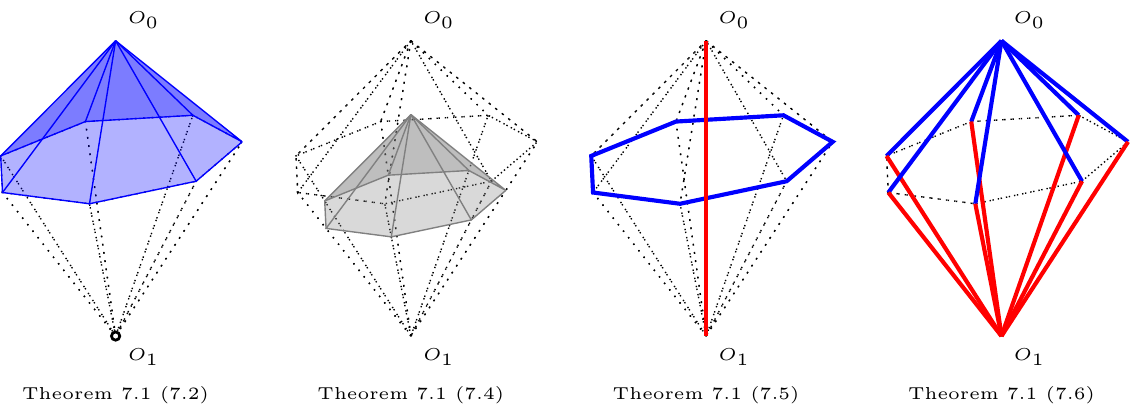}
	\caption{Hermite-Hadamard inequalities for dipyramid}
	\label{fig:HermiteHadamardInewualitiesForDipyramid}
\end{center}
\end{figure}

Denote by $\eta_i,\ i=0,1$ the angle between the line $l$ and the plane of a side of $\mathcal{U}_{O_i}$ and for $0<s<1$ let $O_s=(1-s)O_0+sO_1$. The umbrella $\mathcal{U}_{O_s}$ splits the dipyramid $\mathcal{D}$ into two dipyramids $\mathcal{D}_s^0$ and $\mathcal{D}_s^1$ with vertices $O_0,O_s$ and $O_1,O_s$ respectively. It is clear, that
\begin{equation}
\Vol{\mathcal{D}_s^0}=s\Vol{\mathcal{D}}\quad\text{and}\quad \Vol{\mathcal{D}_s^1}=(1-s)\Vol{\mathcal{D}}.
\label{eq:dipyramid_volume_split}
\end{equation}
Note also that 
$$\frac{\Area(\mathcal{U}_{O_0})}{\Area(\mathcal{U}_{O_1})}=\frac{\sin \eta_1}{\sin \eta_0}$$
which gives
\begin{equation}
\Area(\partial\mathcal{D})=\frac{\sin\eta_0+\sin\eta_1}{\sin\eta_1}\Area(\mathcal{U}_{O_0})=\frac{\sin\eta_0+\sin\eta_1}{\sin\eta_0}\Area(\mathcal{U}_{O_1}).
\label{eq:dipyramid_side_areas}
\end{equation}
Now we are ready to generalize the results of Theorem \ref{thm:dipyramid1}.
\begin{theorem}\label{thm:dipyramid2}
	Under the assumptions of Theorem \ref{thm:dipyramid1} for all $0<s<1$ the inequalities 
\begin{equation}
	\Avg(f,\mathcal{D})\leq \tfrac{1}{4}f(O_s)+\tfrac{3}{4}\left(s\Avg(f,\mathcal{U}_{O_0})+(1-s)\Avg(f,\mathcal{U}_{O_1})\right),	\label{neq:dipyramidO_s}
\end{equation}
\begin{equation}
s\Avg(f,h_{O_s}^{3/4}(\mathcal{U}_{O_0}))+(1-s)\Avg(f,h_{O_s}^{3/4}(\mathcal{U}_{O_1}))\leq \Avg(f,\mathcal{D}),
\label{neq:dipyramidO_s_inv1}
\end{equation}
\begin{equation}
\Avg(f,\partial h_{O_s}^{3/4}(\mathcal{D}))\leq \frac{\sin\eta_1\Avg(f,\mathcal{D}_s^0)}{\sin\eta_0+\sin\eta_1}+\frac{\sin\eta_0\Avg(f,\mathcal{D}_s^1)}{\sin\eta_0+\sin\eta_1}
\label{neq:dipyramidO_s_inv2}
\end{equation}	
are valid.
\end{theorem}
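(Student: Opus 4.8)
The plan is to refine the decomposition of $\mathcal{D}$ into the simplices $T_k=O_0O_1A_kA_{k+1}$ by cutting each $T_k$ with the umbrella $\mathcal{U}_{O_s}$. Since $O_s=(1-s)O_0+sO_1$ lies on the edge $O_0O_1$ at the fraction $s$ from $O_0$, each $T_k$ splits into $T_k^0=O_0O_sA_kA_{k+1}$ and $T_k^1=O_sO_1A_kA_{k+1}$, which are the building blocks of $\mathcal{D}_s^0$ and $\mathcal{D}_s^1$. Exactly as in \eqref{eq:dipyramid_volume_split} one has $\Vol{T_k^0}=s\Vol{T_k}$ and $\Vol{T_k^1}=(1-s)\Vol{T_k}$, so that $\Avg(f,T_k)=s\Avg(f,T_k^0)+(1-s)\Avg(f,T_k^1)$; and because $P$ is regular with $O_0,O_1$ on its axis, all the $T_k$ (and all $T_k^0$, all $T_k^1$) have equal volume, whence $\Avg(f,\mathcal{D})=\frac1n\sum_k\Avg(f,T_k)$ and $\Avg(f,\mathcal{D}_s^i)=\frac1n\sum_k\Avg(f,T_k^i)$. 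The same averaging over equal-area faces gives $\Avg(f,\mathcal{U}_{O_i})=\frac1n\sum_k\Avg(f,O_iA_kA_{k+1})$.

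For \eqref{neq:dipyramidO_s} I would apply Theorem \ref{th:R} to each half-simplex treating $O_s$ as the distinguished vertex: grouping the vertices of $T_k^0$ as $\{O_s\}$ and $\{O_0,A_k,A_{k+1}\}$ yields $\Avg(f,T_k^0)\le\frac14 f(O_s)+\frac34\Avg(f,O_0A_kA_{k+1})$, and likewise $\Avg(f,T_k^1)\le\frac14 f(O_s)+\frac34\Avg(f,O_1A_kA_{k+1})$. Forming the combination $s\Avg(f,T_k^0)+(1-s)\Avg(f,T_k^1)=\Avg(f,T_k)$, the two $\tfrac14 f(O_s)$ terms merge into a single $\tfrac14 f(O_s)$, and summing over $k$ and dividing by $n$ turns the face-averages into $\Avg(f,\mathcal{U}_{O_0})$ and $\Avg(f,\mathcal{U}_{O_1})$, producing \eqref{neq:dipyramidO_s}.

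For the two lower bounds I would invoke Theorem \ref{th:L} with the singleton $K=\{O_s\}$. The \emph{key observation} is that for the tetrahedron $T_k^0$ with apex $O_s$ the simplex $\Delta^{[\{O_s\}]}$ is exactly the homothetic copy $h_{O_s}^{3/4}(O_0A_kA_{k+1})$ of the opposite face, since $\mathbf{x}_j^{[K]}=\frac14 O_s+\frac34\mathbf{x}_j$ when $\card K=1$ and $n=3$. Theorem \ref{th:L} then gives $\Avg(f,h_{O_s}^{3/4}(O_iA_kA_{k+1}))\le\Avg(f,T_k^i)$ for $i=0,1$. Summing over $k$, and using that a homothety preserves the equal-area property of the triangles, yields $\Avg(f,h_{O_s}^{3/4}(\mathcal{U}_{O_i}))\le\Avg(f,\mathcal{D}_s^i)$; taking the $s,1-s$ combination together with $\Avg(f,\mathcal{D})=\frac1n\sum_k[s\Avg(f,T_k^0)+(1-s)\Avg(f,T_k^1)]$ gives \eqref{neq:dipyramidO_s_inv1}. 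For \eqref{neq:dipyramidO_s_inv2} I would write $\partial h_{O_s}^{3/4}(\mathcal{D})=h_{O_s}^{3/4}(\mathcal{U}_{O_0})\cup h_{O_s}^{3/4}(\mathcal{U}_{O_1})$; since the homothety scales both umbrella areas by $(3/4)^2$, the ratio $\Area(\mathcal{U}_{O_0}):\Area(\mathcal{U}_{O_1})=\sin\eta_1:\sin\eta_0$ recorded in \eqref{eq:dipyramid_side_areas} is preserved, so $\Avg(f,\partial h_{O_s}^{3/4}(\mathcal{D}))$ is the convex combination of $\Avg(f,h_{O_s}^{3/4}(\mathcal{U}_{O_0}))$ and $\Avg(f,h_{O_s}^{3/4}(\mathcal{U}_{O_1}))$ with weights $\frac{\sin\eta_1}{\sin\eta_0+\sin\eta_1}$ and $\frac{\sin\eta_0}{\sin\eta_0+\sin\eta_1}$; bounding each umbrella-average below by the matching $\Avg(f,\mathcal{D}_s^i)$ delivers \eqref{neq:dipyramidO_s_inv2}.

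The routine parts are the volume and area bookkeeping, which are forced by \eqref{eq:dipyramid_volume_split} and \eqref{eq:dipyramid_side_areas}. The one place that needs care — and the main conceptual step — is the identification of $\Delta^{[\{O_s\}]}$ with the homothety $h_{O_s}^{3/4}$ of the opposite face, which is precisely what lets Theorem \ref{th:L} feed the scaled umbrellas $h_{O_s}^{3/4}(\mathcal{U}_{O_i})$ directly into the estimates; once that is in place, everything else reduces to a weighted summation matching the weights already attached to $\mathcal{D}_s^0,\mathcal{D}_s^1$ and to the two umbrellas.
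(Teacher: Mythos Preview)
Your argument is correct and is essentially the paper's own proof, carried out one level of granularity finer: where the paper invokes the already-proved inequalities \eqref{eq:dipyramidO_0}--\eqref{eq:dipyramidO_1_inv} for the sub-dipyramids $\mathcal{D}_s^0$ and $\mathcal{D}_s^1$ and then takes the $(s,1-s)$ combination via \eqref{eq:dipyramid_volume_split}, you unpack those inequalities into their simplex-by-simplex content (Theorems \ref{th:R} and \ref{th:L} applied to $T_k^0,T_k^1$) and sum. The identification of $\Delta^{[\{O_s\}]}$ with $h_{O_s}^{3/4}$ of the opposite face, and the area bookkeeping for \eqref{neq:dipyramidO_s_inv2} via \eqref{eq:dipyramid_side_areas}, match the paper exactly; the one phrasing slip is that in the last step you bound each scaled-umbrella average \emph{above} by $\Avg(f,\mathcal{D}_s^i)$, not below.
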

\begin{proof}
	To prove \eqref{neq:dipyramidO_s} we apply inequality \eqref{eq:dipyramidO_0} to $\mathcal{D}_s^0$ and to $\mathcal{D}_s^1$ and obtain
	\begin{align*}
		\Avg(f,\mathcal{D}_s^0)	&\leq \tfrac{1}{4}f(O_s)+\tfrac{3}{4}\Avg(f,\mathcal{U}_{O_0}),	\\
		\Avg(f,\mathcal{D}_s^1)	&\leq \tfrac{1}{4}f(O_s)+\tfrac{3}{4}\Avg(f,\mathcal{U}_{O_1}).	
	\end{align*}
	Now we multiply the first inequality by $s$, the second by $1-s$ and add up both inequalities taking into account equalities \eqref{eq:dipyramid_volume_split}.
	
	The proof of \eqref{neq:dipyramidO_s_inv1} is similar but uses \eqref{eq:dipyramidO_0_inv} and \eqref{eq:dipyramidO_1_inv}.
	
	And finally from \eqref{eq:dipyramidO_0_inv}, \eqref{eq:dipyramidO_1_inv} and \eqref{eq:dipyramid_side_areas} (which remains valid for homothetic images also) it follows that
	$$\frac{\sin\eta_0+\sin\eta_1}{\sin\eta_{1-i}}\frac{1}{\Area(\partial h_{O_s}^{3/4}(\mathcal{D}))}\int_{h_{O_s}^{3/4}(\mathcal{U}_{O_i})}f(\mathbf{x})\d{\mathbf{x}}\leq \Avg(f,\mathcal{D}_s^i),\quad i=0,1.$$
We complete the proof by dividing these inequalities by the first term and adding side by side, because $\partial h_{O_s}^{3/4}(\mathcal{D})=h_{O_s}^{3/4}(\mathcal{U}_{O_0})\cup h_{O_s}^{3/4}(\mathcal{U}_{O_1})$.	
\end{proof}	
\begin{corollary}
		Let $s^*=\frac{\sin\eta_0}{\sin\eta_0+\sin\eta_1}$. The equations   \eqref{eq:dipyramid_side_areas}, \eqref{eq:dipyramid_volume_split} and Theorem \ref{thm:dipyramid2} imply that 
		$$\Avg(f,\mathcal{D})\leq \tfrac{1}{4}f(O_{s^*})+\tfrac{3}{4}\Avg(f,\partial\mathcal{D}),$$
		$$\Avg(f,\partial h_{O_{s^*}}^{3/4}(\mathcal{D}))\leq \Avg(f,\mathcal{D}).$$
	\end{corollary}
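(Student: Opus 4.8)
The plan is to read off both inequalities as the single instance $s=s^*$ of Theorem~\ref{thm:dipyramid2}; the entire content is to recognise the convex combinations occurring there as $\Avg(f,\partial\mathcal D)$ and $\Avg(f,\mathcal D)$ respectively. Nothing beyond Theorem~\ref{thm:dipyramid2} together with the two bookkeeping identities \eqref{eq:dipyramid_volume_split} and \eqref{eq:dipyramid_side_areas} is needed, so the argument is a matching of weights rather than a genuinely new estimate.

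For the first inequality I would start from \eqref{neq:dipyramidO_s} with $s=s^*$, which already has the shape $\tfrac14 f(O_{s^*})+\tfrac34\bigl(s^*\Avg(f,\mathcal U_{O_0})+(1-s^*)\Avg(f,\mathcal U_{O_1})\bigr)$. Since $\partial\mathcal D=\mathcal U_{O_0}\cup\mathcal U_{O_1}$ and the two umbrellas overlap only along $P$ (a set of area zero), the boundary average is the area-weighted mean
$$\Avg(f,\partial\mathcal D)=\frac{\Area(\mathcal U_{O_0})}{\Area(\partial\mathcal D)}\Avg(f,\mathcal U_{O_0})+\frac{\Area(\mathcal U_{O_1})}{\Area(\partial\mathcal D)}\Avg(f,\mathcal U_{O_1}).$$
The remaining step is then to check, via \eqref{eq:dipyramid_side_areas}, that the coefficient $s^*$ of $\Avg(f,\mathcal U_{O_0})$ equals the area fraction $\Area(\mathcal U_{O_0})/\Area(\partial\mathcal D)$ (and hence $1-s^*$ equals that of $\mathcal U_{O_1}$); with this identification the bracketed combination collapses to $\Avg(f,\partial\mathcal D)$. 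Here one also uses that the umbrella $\mathcal U_{O_0}$ bounding the sub-dipyramid $\mathcal D_{s}^0$ is literally the original face $\mathcal U_{O_0}$, so the averages in \eqref{neq:dipyramidO_s} really are over the full umbrellas.

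For the second inequality I would start from \eqref{neq:dipyramidO_s_inv2} with $s=s^*$, whose right-hand side is a convex combination of $\Avg(f,\mathcal D_{s^*}^0)$ and $\Avg(f,\mathcal D_{s^*}^1)$ with coefficients $\tfrac{\sin\eta_1}{\sin\eta_0+\sin\eta_1}$ and $\tfrac{\sin\eta_0}{\sin\eta_0+\sin\eta_1}$. By \eqref{eq:dipyramid_volume_split} the full average splits as $\Avg(f,\mathcal D)=s^*\Avg(f,\mathcal D_{s^*}^0)+(1-s^*)\Avg(f,\mathcal D_{s^*}^1)$, so it suffices to confirm that the volume-split parameter $s^*$ coincides with the first of those two coefficients. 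Once this is verified the right-hand side of \eqref{neq:dipyramidO_s_inv2} becomes exactly $\Avg(f,\mathcal D)$, which is the claim.

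The only point requiring care — and what I would regard as the crux — is that the \emph{same} $s^*$ serves both parts: it must simultaneously be the lateral-area fraction of $\mathcal U_{O_0}$ within $\partial\mathcal D$ (needed in the first inequality) and the volume-split parameter of $\mathcal D_{s^*}^0$ (needed in the second). Confirming that these two descriptions single out one and the same $s^*$ is precisely the short computation combining \eqref{eq:dipyramid_side_areas} with \eqref{eq:dipyramid_volume_split}; it is where the explicit trigonometric value of $s^*$ gets pinned down, and once it is in hand both displayed inequalities drop out with no further work.
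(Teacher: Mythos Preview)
Your plan is exactly the paper's: specialise Theorem~\ref{thm:dipyramid2} at a single value of $s$ and use \eqref{eq:dipyramid_side_areas}, \eqref{eq:dipyramid_volume_split} to identify the resulting convex combinations with $\Avg(f,\partial\mathcal D)$ and $\Avg(f,\mathcal D)$. The paper offers no proof beyond that hint, so on the level of method you are aligned with it.

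There is, however, a slip in the weight-matching you assert. From \eqref{eq:dipyramid_side_areas} one reads off
\[
\frac{\Area(\mathcal U_{O_0})}{\Area(\partial\mathcal D)}=\frac{\sin\eta_1}{\sin\eta_0+\sin\eta_1},
\]
which, with the corollary's definition $s^*=\sin\eta_0/(\sin\eta_0+\sin\eta_1)$, equals $1-s^*$, not $s^*$ as you claim. The same mismatch occurs in the second part: the coefficient of $\Avg(f,\mathcal D_s^0)$ on the right of \eqref{neq:dipyramidO_s_inv2} is $\sin\eta_1/(\sin\eta_0+\sin\eta_1)=1-s^*$, whereas by \eqref{eq:dipyramid_volume_split} one needs that coefficient to equal $s$ for the right-hand side to collapse to $\Avg(f,\mathcal D)$. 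So the ``short computation'' you postpone actually pins down $s=1-s^*$, not $s=s^*$; carrying it out would have revealed that the printed value of $s^*$ has the indices $\eta_0,\eta_1$ swapped. Your method is the intended one and is sound, but as written it does not verify the stated formula---and the step you flagged as the crux is precisely the one that fails.
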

\vspace{2em}	

With $n$ growing to infinity our dipyramids approximate a \textit{dicone}, where the $n$-gon $P$ gets replaced by a circle of radius $R$. All formulas \eqref{eq:dipyramidO_0}-\eqref{eq:dipyramidO_0O_1}, \eqref{neq:dipyramidO_s}-\eqref{neq:dipyramidO_s_inv2} remain valid, while the formula \eqref{eq:dipyramid_sticks} needs a modification.

Let us introduce a coordinate system in the most natural way (center $O$ at the center of $P$, $z$-axis along the line $l$ and $x$-axis along $OA_0$). Then we have
\begin{align*}
\Avg(f,\mathcal{S}_{O_0})	&=\frac{1}{n|O_0A_0|}	\sum_{k=0}^{n-1}\int_{O_0A_k} f(x,y,z)\d{l}\\
\intertext{with $x=r\cos\varphi,\ y=r\sin\varphi,\ z=(R-r)\cot\eta_0,\ 0\leq r\leq R$}
	&	=\frac{1}{nR}	\sum_{k=0}^{n-1}\int_{0}^R f\left(r\cos\tfrac{2\pi k}{n},r\sin\tfrac{2\pi k}{n},(R-r)\cot\eta_0\right)\d{r}\\
	&\to \frac{1}{2\pi R}\int_0^R\int_0^{2\pi} f\left(r\cos\varphi,r\sin\varphi,(R-r)\cot\eta_0\right)\d{\varphi}\d{r}\\
	&=\frac{1}{2\pi R}\iint\limits_{x^2+y^2\leq R^2} f\left(x,y,(R-\sqrt{x^2+y^2})\cot\eta_0\right)\frac{1}{\sqrt{x^2+y^2}}\d{x}\d{y}\\
	&=\frac{\sin\eta_0}{2\pi R}\iint\limits_{\mathcal{U}_{O_0}} \frac{f(x,y,z)}{\sqrt{x^2+y^2}}\d{S}.
\end{align*}	
Therefore the following result holds.
	\begin{theorem}
		If $f$ is a convex function defined on a dicone $\mathcal{D}$, and $g(\mathbf{x})=\frac{f(\mathbf{x})}{\dist(\mathbf{x},l)}$ ($\dist(\mathbf{x},l)$ denotes the distance from $\mathbf{x}$ to the line $l$),  then
		$$\Avg(f,\mathcal{D})\leq\frac{R}{4}\left(\Avg(g,\mathcal{U}_{O_0})+\Avg(g,\mathcal{U}_{O_1})\right).$$
	\end{theorem}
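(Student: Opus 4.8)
The plan is to pass to the dicone limit in the scaffold estimate \eqref{eq:dipyramid_sticks}, which for every dipyramid reads $\Avg(f,\mathcal{D})\leq\tfrac12\bigl(\Avg(f,\mathcal{S}_{O_0})+\Avg(f,\mathcal{S}_{O_1})\bigr)$ and remains valid in the limit because convexity of $f$ on $\mathcal{D}$ guarantees convexity on each simplex $O_0O_1A_kA_{k+1}$. The only quantities that change their form under the limit are the two scaffold averages, and the computation carried out just before the statement already rewrites one of them as a surface integral over the cone $\mathcal{U}_{O_0}$,
$$\Avg(f,\mathcal{S}_{O_0})=\frac{\sin\eta_0}{2\pi R}\iint\limits_{\mathcal{U}_{O_0}}\frac{f(x,y,z)}{\sqrt{x^2+y^2}}\d{S},$$
with the analogous identity holding for $\mathcal{U}_{O_1}$.

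Next I would recognise that, since $l$ is the $z$-axis, one has $\dist(\mathbf{x},l)=\sqrt{x^2+y^2}$, so the integrand above is exactly $g(\mathbf{x})=f(\mathbf{x})/\dist(\mathbf{x},l)$. Writing the integral as $\Area(\mathcal{U}_{O_0})\,\Avg(g,\mathcal{U}_{O_0})$ then reduces the problem to evaluating the lateral area of the cone $\mathcal{U}_{O_0}$. From the parametrization $z=(R-r)\cot\eta_0$ the apex sits at height $R\cot\eta_0$, so the slant height equals $\sqrt{R^2+R^2\cot^2\eta_0}=R/\sin\eta_0$ and hence $\Area(\mathcal{U}_{O_0})=\pi R\cdot\tfrac{R}{\sin\eta_0}=\pi R^2/\sin\eta_0$. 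Substituting this back, the factors $\sin\eta_0$ cancel and the prefactor collapses to the clean constant
$$\Avg(f,\mathcal{S}_{O_0})=\frac{\sin\eta_0}{2\pi R}\cdot\frac{\pi R^2}{\sin\eta_0}\,\Avg(g,\mathcal{U}_{O_0})=\frac{R}{2}\Avg(g,\mathcal{U}_{O_0}),$$
and likewise $\Avg(f,\mathcal{S}_{O_1})=\tfrac{R}{2}\Avg(g,\mathcal{U}_{O_1})$.

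Finally I would feed these two identities into the limiting form of \eqref{eq:dipyramid_sticks}, obtaining $\Avg(f,\mathcal{D})\leq\tfrac12\bigl(\tfrac{R}{2}\Avg(g,\mathcal{U}_{O_0})+\tfrac{R}{2}\Avg(g,\mathcal{U}_{O_1})\bigr)=\tfrac{R}{4}\bigl(\Avg(g,\mathcal{U}_{O_0})+\Avg(g,\mathcal{U}_{O_1})\bigr)$, which is the asserted inequality. I expect the only genuinely delicate point to be the justification of the limit transition itself, namely that the Riemann-sum-to-integral convergence used for $\Avg(f,\mathcal{S}_{O_0})$ and the simultaneous passage of \eqref{eq:dipyramid_sticks} to the dicone are compatible; but this is entirely analogous to the limiting arguments already employed for the formulas \eqref{eq:dipyramidO_0}--\eqref{eq:dipyramidO_0O_1}, so that no new difficulty arises beyond the bookkeeping of the cone's surface area.
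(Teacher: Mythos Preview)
Your proposal is correct and follows exactly the paper's route: pass the scaffold inequality \eqref{eq:dipyramid_sticks} to the dicone limit, use the computed limit $\Avg(f,\mathcal{S}_{O_0})\to\frac{\sin\eta_0}{2\pi R}\iint_{\mathcal{U}_{O_0}}\frac{f}{\sqrt{x^2+y^2}}\,\d{S}$, and identify the result with the stated bound. In fact you supply more detail than the paper, which simply writes ``Therefore the following result holds'' after the limit calculation; your explicit evaluation $\Area(\mathcal{U}_{O_0})=\pi R^2/\sin\eta_0$ and the resulting cancellation giving the clean factor $R/2$ are precisely the bookkeeping the paper leaves to the reader.
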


\section{Cube}

A cube being a Platonic body enjoys all properties  discussed in Section \ref{sec:Platonic}. In this section we present a handful of other applications of Theorems \ref{th:R} and \ref{th:L}. 

\begin{theorem}\label{thm:cube3}
	Let $\mathcal{C}$ be a cube, and $f\colon\mathcal{C}\to\RR$ be a function convex on every pyramid formed by a face and the center $O$ of the cube. Fix two opposite vertices of a cube and let $P$ be a set being the sum  of six diagonals of faces meeting at these vertices. Let $Q$ be the set of three main diagonals joining the remaining six vertices. Then  
	$$\Avg(f,\mathcal{C})\leq\frac{1}{2}\Avg(f,P)+\frac{1}{2}\Avg(f,Q).$$
\end{theorem}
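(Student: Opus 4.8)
The plan is to decompose $\mathcal{C}$ into twelve tetrahedra of equal volume, each containing the center $O$, and to apply Theorem \ref{th:R} to each with the grouping that isolates one segment of $P$ and one segment of $Q$. Denote the two fixed opposite vertices by $V$ and $V'$. Each of the six faces of $\mathcal{C}$ contains exactly one of $V,V'$, and the diagonal of $P$ lying on that face is the one joining this vertex to the opposite corner of the face. That diagonal cuts the face into two triangles, and joining each triangle to $O$ produces a tetrahedron. Since each such tetrahedron is the pyramid over a half-face with apex $O$, all half-faces having equal area and $O$ being equidistant from all faces, the twelve tetrahedra share a common volume, so $\Avg(f,\mathcal{C})=\frac{1}{12}\sum_T\Avg(f,T)$. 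Moreover each $T$ lies inside the pyramid spanned by $O$ and the face carrying it, so the hypothesis on $f$ guarantees that $f$ is convex on $T$ and Theorem \ref{th:R} applies.

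First I would run Theorem \ref{th:R} on a single tetrahedron $T=\conv\{O,X,Y,W\}$, where $XY$ is the face diagonal belonging to $P$ and $W$ is the remaining (off-diagonal) vertex of that face. Grouping the four vertices as $\{X,Y\}$ and $\{O,W\}$ yields
$$\Avg(f,T)\leq \tfrac{1}{2}\Avg(f,XY)+\tfrac{1}{2}\Avg(f,OW).$$
Here $XY\in P$ by construction. Since $W$ is not one of the two special vertices, its antipode through $O$ is again non-special, so the main diagonal through $W$ is one of the three diagonals constituting $Q$ and $O$ is its midpoint; thus $OW$ is exactly half of a segment of $Q$.

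The heart of the matter is the bookkeeping that reassembles these right-hand sides into $\Avg(f,P)$ and $\Avg(f,Q)$. Each diagonal of $P$ is shared by the two tetrahedra built on its face, hence is counted twice in $\sum_T\Avg(f,XY)$; as all six diagonals of $P$ are congruent, $\frac{1}{12}\sum_T\tfrac{1}{2}\Avg(f,XY)=\tfrac{1}{2}\cdot\frac{1}{6}\sum_{d}\Avg(f,d)=\tfrac{1}{2}\Avg(f,P)$, the sum running over the six diagonals $d$ of $P$. For the $Q$-part, one checks that the six ``far'' endpoints of the face diagonals are precisely the six non-special vertices, each occurring once; consequently every non-special vertex $W$ is off-diagonal on exactly two of the three faces through it, so each half-diagonal $OW$ occurs in exactly two tetrahedra. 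Since all six half-diagonals are congruent and each diagonal of $Q$ splits at $O$ into two such halves, $\frac{1}{12}\sum_T\tfrac{1}{2}\Avg(f,OW)=\frac{1}{12}\sum_W\Avg(f,OW)=\tfrac{1}{2}\Avg(f,Q)$. Summing the twelve instances of Theorem \ref{th:R} and dividing by twelve then gives the asserted inequality.

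The main obstacle I anticipate is exactly this combinatorial reassembly: one must verify both multiplicity counts — each diagonal of $P$ and each half-diagonal $OW$ appearing in exactly two tetrahedra — and then invoke the equality of all face-diagonal lengths and of all half-diagonal lengths so that the average of $f$ over the union $P$ (respectively $Q$) coincides with the unweighted mean of the individual segment averages. Once the incidence fact ``each non-special vertex is the far endpoint of exactly one face diagonal'' is established, the remaining steps are routine.
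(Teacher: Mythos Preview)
Your argument is correct and is essentially the paper's own proof: you decompose each face-pyramid $ABCDO$ into two tetrahedra along the $P$-diagonal $AC$ and apply Theorem~\ref{th:R} with the grouping $\{A,C\},\{B,O\}$ (resp.\ $\{A,C\},\{D,O\}$), then sum over faces. The paper states this in two lines and leaves the multiplicity bookkeeping implicit, whereas you spell out carefully that each $P$-diagonal and each half-diagonal $OW$ is used exactly twice; your incidence check (each non-special vertex is the far endpoint of exactly one $P$-diagonal, hence off-diagonal on exactly two of its three faces) is right and fills that gap cleanly.
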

\begin{proof}
	Let $ABCD$ be the face containing diagonal $AC$ in $P$. The pyramid $ABCDO$ is the sum of two simplices $ABCO$ and $ACDO$. Splitting vertices of each of them into   groups $\{AC\},\{BO\}$ and $\{AC\},\{DO\}$ respectively one gets
	\begin{align*}
		\Avg(f,ABCO)&\leq \frac{1}{2}\left(\Avg(f,AC)+\Avg(f,BO)\right)	\\
		\Avg(f,ACDO)&	\leq \frac{1}{2}\left(\Avg(f,AC)+\Avg(f,DO)\right)
	\end{align*}
	which gives
	$$2\Avg(f,ABCDO)\leq \Avg(f,AC)+\frac{1}{2}(\Avg(f,BO)+\Avg(f,DO)).$$
	We complete the proof in usual way, applying the same process to all diagonals in $P$.
\end{proof}
\begin{theorem}\label{thm:cube4}
Let $\mathcal{C}$ be a cube, and $f\colon\mathcal{C}\to\RR$ be a function convex on every pyramid formed by a face and the center $O$ of the cube.	Fix two opposite vertices of $\mathcal{C}$ and let $S$ be a set being the sum  of edges meeting at these vertices. Let $Q$ be the set of three main diagonals joining the remaining six vertices. Then
	$$\Avg(f,\mathcal{C})\leq\frac{1}{2}\Avg(f,S)+\frac{1}{2}\Avg(f,Q).$$
\end{theorem}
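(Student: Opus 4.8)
The plan is to reproduce, almost verbatim, the decomposition used in the proof of Theorem \ref{thm:cube3}, but to split each face‑pyramid so that the resulting Theorem \ref{th:R} applications produce \emph{edges} of $S$ in place of the face diagonals of $P$. It is convenient to place $O$ at the origin with the vertices of $\mathcal C$ at $(\pm1,\pm1,\pm1)$ and to take the two fixed opposite vertices to be $V=(1,1,1)$ and $V'=(-1,-1,-1)$. Since antipodal vertices share no face, each of the six faces of $\mathcal C$ contains exactly one of $V,V'$; thus the faces fall into three "containing $V$" and three "containing $V'$", and between them they carry all six edges of $S$ (three at $V$ and three at $V'$).

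Next I would fix a face $ABCD$ whose distinguished vertex $A$ lies in $\{V,V'\}$. The two face edges at $A$, namely $AB$ and $AD$, belong to $S$, while the vertex $C$ opposite $A$ in the face is one of the six remaining vertices; its antipode $-C$ is again a remaining vertex, so $CO$ is exactly one half of a main diagonal in $Q$. The pyramid $ABCDO$ (face plus apex $O$), on which $f$ is convex by hypothesis, decomposes along the diagonal $AC$ into the two equal‑volume simplices $ABCO$ and $ACDO$, and applying Theorem \ref{th:R} to each with the vertex groupings $\{A,B\},\{C,O\}$ and $\{A,D\},\{C,O\}$ gives
\begin{align*}
\Avg(f,ABCO)&\le\tfrac12\bigl(\Avg(f,AB)+\Avg(f,CO)\bigr),\\
\Avg(f,ACDO)&\le\tfrac12\bigl(\Avg(f,AD)+\Avg(f,CO)\bigr).
\end{align*}
Averaging over the two equal‑volume simplices yields, for each face,
\begin{equation*}
2\Avg(f,ABCDO)\le\tfrac12\bigl(\Avg(f,AB)+\Avg(f,AD)\bigr)+\Avg(f,CO).
\end{equation*}

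Finally I would sum this inequality over all six faces. Because the six pyramids partition $\mathcal C$ into equal‑volume pieces, the left‑hand side becomes $12\,\Avg(f,\mathcal C)$. On the right, each edge of $S$ lies on exactly two faces and is counted with weight $\tfrac12$ on each, so the edge contributions total $\sum_{e\in S}\Avg(f,e)=6\,\Avg(f,S)$; and as $A$ runs over the six faces the opposite corners $C$ run through precisely the six remaining vertices, so the half‑diagonals $CO$ recombine antipodally, giving $\sum_{\text{faces}}\Avg(f,CO)=2\sum_{d\in Q}\Avg(f,d)=6\,\Avg(f,Q)$. Dividing by $12$ produces the asserted bound $\Avg(f,\mathcal C)\le\tfrac12\Avg(f,S)+\tfrac12\Avg(f,Q)$. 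The analytic content is entirely carried by Theorem \ref{th:R}, so the only genuine obstacle is the incidence bookkeeping: one must check that every face really contributes two distinct $S$‑edges at its distinguished vertex, that the accumulated weights reproduce $\Avg(f,S)$ exactly, and that the half‑diagonals $CO$ pair off into the three full main diagonals of $Q$ without omission or repetition. The coordinate model makes each of these verifications routine.
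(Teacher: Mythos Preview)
Your proof is correct and follows exactly the approach the paper uses: decompose each face-pyramid $ABCDO$ along the diagonal $AC$ into two equal simplices and apply Theorem~\ref{th:R} with the groupings $\{A,B\},\{C,O\}$ and $\{A,D\},\{C,O\}$, then sum over all six faces. The paper's own proof is a one-line reference back to Theorem~\ref{thm:cube3} with these modified groupings, so your write-up simply supplies the incidence bookkeeping that the paper leaves to the reader.
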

\begin{proof}
	The proof goes exactly the same way as the previous one, but this time we split the vertices of simplices into groups $\{CO\},\{AB\}$ and $\{CO\},\{AD\}$ respectively which leads to
	$$2\Avg(f,ABCDO)\leq \Avg(f,OC)+\frac{1}{2}(\Avg(f,AB)+\Avg(f,AD)).\qedhere $$
\end{proof}

The cube can be split into six simplices of equal volumes in different ways. One of them is particularly interesting - we shall call it a \textit{diagonal split}. Select two opposite vertices, say $O_1$ and $O_2$. The remaining vertices can be connected by edges of the cube so that they form a closed polygonal line $L=V_0\ldots V_5V_0$. The diagonal split consists of six simplices $O_1O_2V_kV_{k+1}$. Note that $O_1V_k$ and $O_2V_{k+1}$ are of the same length - they are both edges of the cube or diagonals of its faces. The next theorem shows how the diagonal split can be explored.
\begin{theorem}\label{thm:cube5}
	Consider a diagonal split of  a cube $\mathcal{C}$.  Denote by $S$ the set of six edges  adjacent to $O_1$ and $O_2$ and by  $P$ the set of six diagonals of faces  adjacent to $O_1$ and $O_2$. If $f\colon\mathcal{C}\to \RR$ is convex on each simplex of  the split, then
\begin{align}
	\Avg(f,\mathcal{C})&\leq \frac{1}{2}\Avg(f,O_1O_2)+\frac{1}{2}\Avg(f,L),\label{neq:cube5_1}	\\
		\Avg(f,\mathcal{C})&\leq\Avg(f,P)\label{neq:cube5_2},	\\
		\Avg(f,\mathcal{C})&\leq\Avg(f,S)\label{neq:cube5_3},	
\end{align}
(see Figure \ref{fig:HermiteHadamardInequalityInCube}).
\end{theorem}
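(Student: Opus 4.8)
The plan is to derive all three inequalities uniformly by applying Theorem~\ref{th:R} to each of the six simplices $O_1O_2V_kV_{k+1}$ of the diagonal split, grouping their four vertices into two disjoint pairs and then summing. Two structural facts do the organizing work. First, the diagonal split consists of six simplices of equal volume, so
$$\Avg(f,\mathcal{C})=\tfrac{1}{6}\sum_{k=0}^{5}\Avg(f,O_1O_2V_kV_{k+1}).$$
Second, within each of the sets $L$, $S$, and $P$ all segments have a common length (edges of $\mathcal{C}$ in $L$ and $S$, face diagonals in $P$), so the average of $f$ over the whole set equals the arithmetic mean of the averages over its segments; e.g.\ $\Avg(f,S)=\tfrac16\sum_{e\in S}\Avg(f,e)$. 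With these in hand, each inequality reduces to choosing the right $2{+}2$ split and counting multiplicities.

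For \eqref{neq:cube5_1} I would group the vertices of each simplex as $\{O_1,O_2\}$ and $\{V_k,V_{k+1}\}$. Since $\card\{O_1,O_2\}=\card\{V_k,V_{k+1}\}=2$, Theorem~\ref{th:R} gives
$$\Avg(f,O_1O_2V_kV_{k+1})\leq\tfrac{1}{2}\Avg(f,O_1O_2)+\tfrac{1}{2}\Avg(f,V_kV_{k+1}).$$
Summing over $k$ and dividing by $6$, the $\Avg(f,O_1O_2)$ term survives with coefficient $\tfrac12$, while the edge terms combine into $\tfrac12\cdot\tfrac16\sum_k\Avg(f,V_kV_{k+1})=\tfrac12\Avg(f,L)$, because $V_0V_1,\dots,V_5V_0$ are exactly the edges constituting $L$ and all share a common length. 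This yields \eqref{neq:cube5_1}.

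For \eqref{neq:cube5_3} and \eqref{neq:cube5_2} I would instead use the cross-pairings $\{O_1,V\}$, $\{O_2,V'\}$. The geometry to exploit is that $O_1,O_2$ are opposite vertices, and the remaining six vertices form the hexagon $L$ in which the neighbours of $O_1$ and the neighbours of $O_2$ alternate. Consequently, for each simplex one of the two cross-pairings produces two edges (both of the form $O_iV_j$ with $V_j$ adjacent to $O_i$) and the other produces two face diagonals. Choosing, for every $k$, the pairing that yields two edges and applying Theorem~\ref{th:R} gives a bound of $\tfrac12\Avg+\tfrac12\Avg$ over edges from $S$; summing over the six simplices, each of the six edges of $S$ occurs exactly twice, so $\tfrac16\sum_k(\text{two half-edge-averages})=\tfrac16\sum_{e\in S}\Avg(f,e)=\Avg(f,S)$, which is \eqref{neq:cube5_3}. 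Selecting instead, for every $k$, the complementary pairing produces two face diagonals each time, each diagonal of $P$ again appearing exactly twice, and the identical count gives \eqref{neq:cube5_2}.

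The only genuine obstacle is the combinatorial bookkeeping in the last paragraph: one must check that the parity-dependent choice of cross-pairing consistently yields segments of a single type (all edges, or all face diagonals) and that each such segment is counted with the correct multiplicity. I would discharge this by fixing coordinates, taking the cube $[-1,1]^3$ with $O_1=(1,1,1)$ and $O_2=(-1,-1,-1)$, writing out the hexagon $L$ explicitly as the cycle on the remaining six vertices, and tabulating for each $k$ which cross-pairing is edge-type versus diagonal-type. Once this incidence table shows that each of the six edges (respectively each of the six face diagonals) arises exactly twice across the six simplices, the passage from the summed simplex inequalities to $\Avg(f,S)$ and $\Avg(f,P)$ is routine; everything else is the same equal-volume, equal-length averaging used for \eqref{neq:cube5_1}.
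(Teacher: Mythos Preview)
Your proposal is correct and follows exactly the paper's approach: apply Theorem~\ref{th:R} to each simplex $O_1O_2V_kV_{k+1}$ with a $2+2$ vertex split, using $\{O_1,O_2\}\cup\{V_k,V_{k+1}\}$ for \eqref{neq:cube5_1} and the two cross-pairings for \eqref{neq:cube5_2} and \eqref{neq:cube5_3}. The paper's proof is a two-line sketch (``Two other splits lead to \eqref{neq:cube5_2} and \eqref{neq:cube5_3}''), so your explicit parity choice and multiplicity count simply flesh out what the paper leaves implicit.
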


\begin{proof}
	To prove \eqref{neq:cube5_1} use Theorem \ref{th:R} dividing the vertices of $O_1O_2V_kV_{k+1}$ into groups $\{O_1O_2\}$ and $\{V_kV_{k+1}\}$. Two other splits lead to \eqref{neq:cube5_2} and \eqref{neq:cube5_3}.
\end{proof}
\begin{figure}[htbp]
\begin{center}

\includegraphics{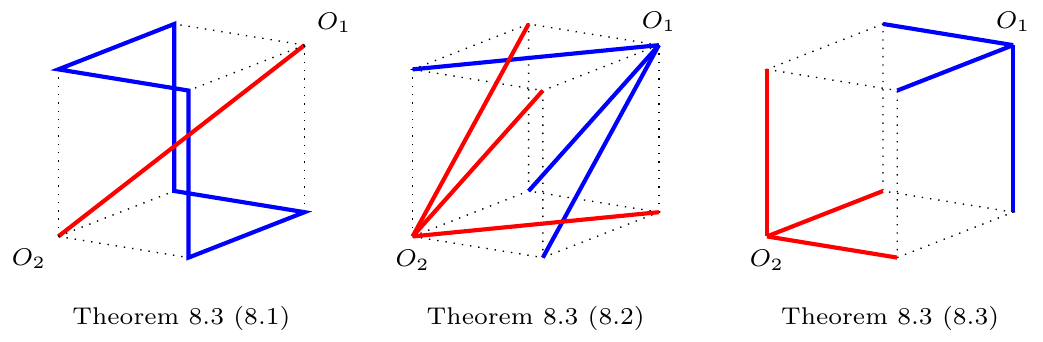}
	\caption{Hermite-Hadamard inequalities for cube}
	\label{fig:HermiteHadamardInequalityInCube}
\end{center}
\end{figure}

Next theorem presents an interesing asymmetric case:
\begin{theorem}\label{thm:cube6}
	Let $A$ be a vertex of the cube $\mathcal{C}$ and let $S$ be the set consisting of its faces nonadjacent to $A$. If $f:\mathcal{C}\to\RR$ is convex, then
	\begin{align}
		\Avg(f,\mathcal{C})&\leq \frac{1}{4}f(A)+\frac{3}{4}\Avg(f,S),\label{neq:cube6_1}	\\
		\Avg(f,h_A^{3/4}(S))&\leq \Avg(f,\mathcal{C}),\label{neq:cube6_2}
	\end{align}
	(see Figure \ref{fig:HermiteHadamardInequalityInCubeCtd}).
\end{theorem}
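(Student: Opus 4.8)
The plan is to imitate the proof of Theorem \ref{thm:platonic}, but with the common apex of the pyramids moved from the center $O$ to the chosen vertex $A$. First I would record the basic decomposition. The three faces nonadjacent to $A$ are exactly the three faces meeting at the vertex opposite to $A$, and their union is $S$. Joining $A$ to each of them produces three pyramids with apex $A$. Working in coordinates (take $\mathcal{C}=[0,1]^3$ with $A$ at the origin, so the three faces are $x=1$, $y=1$, $z=1$), one checks that the pyramid over the face $x=1$ is precisely the set of points of the cube whose first coordinate is the largest, and similarly for the others; hence the three pyramids have pairwise disjoint interiors, tile $\mathcal{C}$, and each has volume $\tfrac13\Vol{\mathcal{C}}$. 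Now let $F$ be one of these faces with center $O'$, split $F$ into the four triangles $O'A_kA_{k+1}$, and split the pyramid $AF$ into the four simplices $AO'A_kA_{k+1}$. Because the four base triangles are congruent and $A$ lies at distance $1$ from each of the three face planes, all twelve simplices so obtained carry the same volume $\tfrac{1}{12}\Vol{\mathcal{C}}$. Since $f$ is convex on $\mathcal{C}$, its restriction to each simplex is convex, so Theorems \ref{th:R} and \ref{th:L} apply.

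For the upper bound \eqref{neq:cube6_1} I would group the vertices of $AO'A_kA_{k+1}$ as $\{A\}$ and $\{O',A_k,A_{k+1}\}$ and invoke Theorem \ref{th:R}:
$$\Avg(f,AO'A_kA_{k+1})\leq\tfrac14 f(A)+\tfrac34\Avg(f,O'A_kA_{k+1}).$$
Averaging these twelve inequalities (legitimate since the simplices have equal volume) turns the left side into $\Avg(f,\mathcal{C})$ and the $f(A)$-terms into $\tfrac14 f(A)$. The remaining term is $\tfrac34$ times the equally weighted average of $\Avg(f,O'A_kA_{k+1})$ over the twelve base triangles; as those triangles are congruent and tile $S$, this average equals $\Avg(f,S)$, which yields \eqref{neq:cube6_1}.

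For the lower bound \eqref{neq:cube6_2} I would apply Theorem \ref{th:L} to each simplex with $K=\{A\}$. From the definition of $\Delta^{[K]}$ one has $\mathbf{x}_j^{[\{A\}]}=\tfrac14 A+\tfrac34\mathbf{x}_j=h_A^{3/4}(\mathbf{x}_j)$, so $\Delta^{[\{A\}]}$ is the triangle $h_A^{3/4}(O'A_kA_{k+1})$ (compare Remark \ref{rem:delta^k na plaszczyznie}), whence
$$\Avg(f,h_A^{3/4}(O'A_kA_{k+1}))\leq\Avg(f,AO'A_kA_{k+1}).$$
Since $h_A^{3/4}$ is a homothety, the twelve scaled triangles tile $h_A^{3/4}(S)$ with equal areas, so averaging over the twelve simplices gives $\Avg(f,h_A^{3/4}(S))\leq\Avg(f,\mathcal{C})$, that is, \eqref{neq:cube6_2}.

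The only delicate point is the geometric bookkeeping underlying the two averaging steps: one must confirm that the three apex-$A$ pyramids genuinely tile the cube, that all twelve simplices carry the same volume, and—most importantly—that the equally weighted averages of $f$ over the base triangles (respectively their homothetic images) reassemble exactly into $\Avg(f,S)$ (respectively $\Avg(f,h_A^{3/4}(S))$). Everything else is a direct transcription of the Platonic-body argument, with the single structural difference that the apex is a vertex rather than the center, which is what makes the problem asymmetric.
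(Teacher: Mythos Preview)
Your proof is correct and is precisely the ``obvious proof'' the paper leaves to the reader: it transports the argument of Theorem \ref{thm:platonic} verbatim, replacing the center $O$ by the vertex $A$ and the six face-pyramids by the three apex-$A$ pyramids over the nonadjacent faces. A minor simplification is available---you could triangulate each square face by a single diagonal rather than through its center, obtaining six equal-volume simplices instead of twelve---but your version has the virtue of matching the Platonic-body proof line for line.
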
 
We leave the obvious proof to the reader.
\begin{figure}[htbp]
\begin{center}
\includegraphics{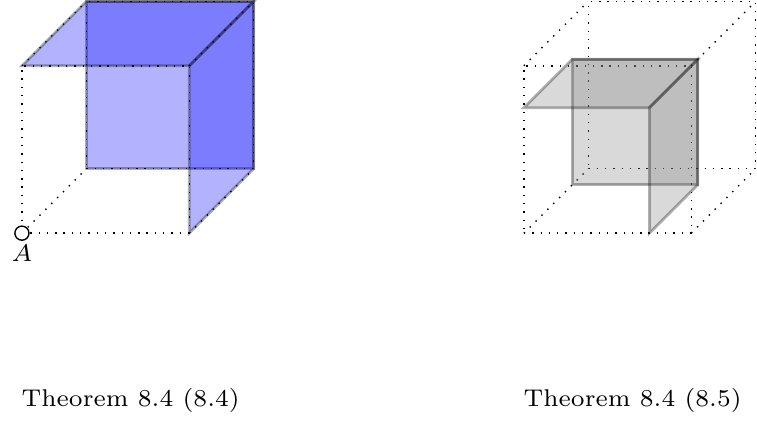}
	\caption{Hermite-Hadamard inequalities for cube ctd.}
	\label{fig:HermiteHadamardInequalityInCubeCtd}
\end{center}
\end{figure}

\bigskip

\end{document}